\newtheorem{thm}[equation]{Theorem}
\newtheorem{prop}[equation]{Proposition}
\newtheorem{lemma}[equation]{Lemma}
\theoremstyle{definition}
\newtheorem{defn}[equation]{Definition}
\newtheorem{remark}[equation]{Remark}
\newtheorem{exam}[equation]{Example}
\numberwithin{equation}{section}
\newcounter{tempcounter}
\newcommand{\FF}{\mathbb{F}}  
\newcommand{\ZZ}{\mathbb{Z}}  
\newcommand{\CC}{\mathbb{C}}  
\newcommand{\A}{\mathsf{A}} 
\newcommand{\Z}{\mathsf{Z}} 
\newcommand\chara{\mathsf{char}}
\newcommand{\pb}[1]{\left\{ #1\right\}}
\newcommand{\seq}[1]{\left( #1\right)}
\newcommand\im{\mathsf{im}}
\newcommand\modd[1]{\, (\mathsf{mod} \, #1)}
\newcommand\degg{\, \mathsf{deg} \,}
\newcommand\spann{\, \mathsf{span}}
\newcommand{\hqfg}{\mathcal{H}_q(f,g)}
\newcommand{\hh}{\mathcal{H}}
\newcommand{\Sf}[1][f]{\mathsf{S}_{#1}}
\newcommand{\Tf}[1][q, g, \lambda]{\mathsf{T}_{#1}}
\newcommand{\Af}[1][\lambda, \mu]{\mathsf{A}_{q, f,g}(#1)}
\newcommand{\Bf}[1][\lambda, \mu]{\mathsf{B}_{q, f,g}(#1)}
\newcommand{\Cf}[1][\alpha]{\mathsf{C}_{q, f,g}(#1)}
\title{Quantum generalized Heisenberg algebras and their representations}
\author{Samuel A.\ Lopes\thanks{Partially supported by CMUP, which is financed by national funds through FCT---Funda\c c\~ao para a Ci\^encia e a Tecnologia, I.P., under the project with reference UIDB/00144/2020.}\ }
\author{Farrokh Razavinia\thanks{Supported by FCT, through the grant PD/BD/142959/2018, under POCH funds, co-financed by the European Social Fund and Portuguese National Funds from MEC.}}
\affil{CMUP, Faculdade de Ci\^encias da
Universidade do Porto, Rua do Campo Alegre 687,
4169-007 Porto (Portugal).}
\date{}
\begin{document}

\maketitle

\begin{abstract}
We introduce and study a new class of algebras, which we name \textit{quantum generalized Heisenberg algebras} and denote by $\hqfg$, related to generalized Heisenberg algebras, but allowing more parameters of freedom, so as to encompass a wider range of applications and include previously studied algebras, such as (generalized) down-up algebras. In particular, our class now includes the enveloping algebra of the $3$-dimensional Heisenberg Lie algebra and its $q$-deformation, neither of which can be realized as a generalized Heisenberg algebra.

This paper focuses mostly on the classification of finite-dimensional irreducible representations of quantum generalized Heisenberg algebras, a study which reveals their rich structure. Although these algebras are not in general noetherian, their representations still retain some Lie-theoretic flavor. We work over a field of arbitrary characteristic, although our results on the representations require that it be algebraically closed.
\newline\newline
\textbf{MSC Numbers (2020)}: Primary 17B81; Secondary 81R10,16G99, 16D60, 16S80, 16T20.
\hfill \newline
\textbf{Keywords}: down-up algebra; generalized Heisenberg algebra; ambiskew polynomial ring; generalized Weyl algebra; irreducible representation.
\end{abstract}

\section{Introduction}\label{S:intro}

This paper introduces and studies a new class of algebras which we name \textit{quantum generalized Heisenberg algebras} (qGHA, for short), as they can be seen simultaneously as deformations and as generalizations of the generalized Heisenberg algebras appearing in \cite{CRM01} and profusely studied thenceforth in the physics literature (see e.g.~\cite{CRMR13}, \cite{BBH11}, \cite{BCG18} and the references therein). In the mathematics literature, generalized Heisenberg algebras were studied mainly in \cite{LZ15}, \cite{LMZ15} and \cite{sL17}. For an overview of their relevance in mathematical physics see the introductory section in \cite{LZ15}.

Our main motivation for introducing a generalization of this class, besides providing a broader framework for the investigation of the underlying physical systems, comes from the observation in \cite{sL17} that the classes of generalized Heisenberg algebras and of (generalized) down-up algebras intersect (see the seminal paper \cite{BR98} on down-up algebras and also \cite{CS04}), although neither one contains the other. The other interesting feature of our study comes from the fact that quantum generalized Heisenberg algebras are generically non-noetherian and we believe that there are yet not enough studies into the representation theory of non-noetherian algebras which are somehow related to deformations of enveloping algebras of Lie algebras, as is the case with quantum generalized Heisenberg algebras. An additional motivation comes from the fact that both the enveloping algebra of the $3$-dimensional Heisenberg Lie algebra and its quantum deformation are quantum generalized Heisenberg algebras but not generalized Heisenberg algebras, so the name seems better justified now.

Let $\FF$ be an arbitrary field. Given a polynomial $f\in\FF[h]$, the generalized Heisenberg algebra associated to $f$, denoted $\hh(f)$, is the unital associative $\FF$-algebra with generators $x$, $y$ and $h$, with defining relations:
\begin{equation*}
hx=xf(h), \quad yh=f(h)y, \quad yx-xy=f(h)-h.
\end{equation*}
In \cite{LZ15}, working over the complex field $\CC$, the authors determined a basis for $\hh(f)$, computed its center, solved the isomorphism problem for this family of algebras and classifyed all the finite-dimensional irreducible representations of $\hh(f)$, whereas in \cite{sL17} it was shown that $\hh(f)$ is isomorphic to a generalized down-up algebra if and only if $\degg f\leq 1$ and that $\hh(f)$ is noetherian if and only if $\degg f= 1$. All the gradings and the automorphism group of $\hh(f)$ were also computed in \cite{sL17}, in case $\degg f> 1$.

In this paper, working over an arbitrary field $\FF$, we introduce a generalization of this class of algebras by deforming and generalizing the relation $yx-xy=f(h)-h$, turning it into a skew-commutation relation and allowing the skew-commutator to equal a generic polynomial, independent of $f$. This will have as a consequence that the new algebras will in general not be conformal (in the sense of the theory of ambiskew polynomial rings---see Subsection~\ref{SS:basic:ambiskew} for details).

\begin{defn}
Let $\FF$ be an arbitrary field and fix $q\in\FF$ and $f,g\in\FF[h]$. The quantum generalized Heisenberg algebra (qGHA, for short), denoted by $\hqfg$, is the $\FF$-algebra generated by $x$, $y$ and $h$, with defining relations:
\begin{equation}\label{E:intro:def:qgha}
hx=xf(h), \quad yh=f(h)y, \quad yx-qxy=g(h).
\end{equation} 
\end{defn}

\subsection{Examples of quantum generalized Heisenberg algebras}\label{SS:intro:eg}

Clearly, the generalized Heisenberg algebras are precisely the qGHA with $q=1$ and $g=f(h)-h$, i.e.\ $\hh(f)=\hh_1(f, f-h)$. Let us consider more general examples.

Recall that, for parameters $\alpha, \beta, \gamma\in\FF$, the down-up algebra $A(\alpha, \beta, \gamma)$ was defined by Benkart and Roby in \cite{BR98} as the unital associative algebra with generators $d$ and $u$ and defining relations:
\begin{equation*}
d^2 u = \alpha dud + \beta ud^2 + \gamma d\quad \mbox{and}\quad
du^2 = \alpha udu + \beta u^2 d + \gamma u. 
\end{equation*}
The motivation for the above relations came from the combinatorics of differential posets but it was realized that down-up algebras included many ubiquitous algebras appearing in Lie theory, including the universal enveloping algebra of the Lie algebra $\mathfrak{sl}_2$ (the down-up algebra $A(2,-1,-2)$) and the universal enveloping algebra of the 3-dimensional Heisenberg Lie algebra $\mathfrak h$ (the down-up algebra $A(2,-1,0)$), as well as many deformations (including Witten's 7-parameter deformations of the universal enveloping algebra of $\mathfrak{sl}_2$, see \cite{gB99}).

In \cite{CS04}, Cassidy and Shelton generalized this construction and introduced the generalized down-up algebra $L(v,r,s,\gamma)$ as the unital associative algebra generated by $d$, $u$ and $h$ with defining relations
\begin{equation*}
dh-rhd +\gamma d =0, \quad \quad hu-ruh +\gamma u=0 \quad \mbox{and}\quad du-sud +v(h)=0,
\end{equation*}
where $r, s, \gamma\in\FF$ and $v\in\FF[h]$.
Generalized down-up algebras include all down-up algebras, as long as the polynomial $h^2-\alpha h-\beta$ has roots in $\FF$. Indeed, if $\alpha=r+s$ and $\beta=-rs$, then it is easy to see that $A(\alpha, \beta, \gamma)\simeq L(-h,r,s,-\gamma)$. Conversely, any generalized down-up algebra $L(v,r,s,\gamma)$ with $\degg v=1$ is a down-up algebra. Moreover, the following are generalized down-up algebras: the algebras \textit{similar to the enveloping algebra of $\mathfrak{sl}_2$} defined by Smith \cite{spS90}, Le Bruyn conformal $\mathfrak{sl}_2$ enveloping algebras \cite{lLB95}, and Rueda's algebras \textit{similar to the enveloping algebra of $\mathfrak{sl}_2$} \cite{sR02}.

We have the following important observation, which implies that the class of generalized down-up algebras coincides with the class of quantum generalized Heisenberg algebras $\hqfg$ such that $\degg f\leq 1$.

\begin{prop}\label{P:intro:gduaRqgha}
Let $r, s, \gamma\in\FF$ and $v\in\FF[h]$. Then the generalized down-up algebra $L(v,r,s,\gamma)$ is isomorphic to the quantum generalized Heisenberg algebra $\hh_s(rh-\gamma, -v)$. In particular, the down-up algebra $A(\alpha, \beta, \gamma)$ is isomorphic to the quantum generalized Heisenberg algebra $\hh_s(rh+\gamma, h)$, where $\alpha=r+s$ and $\beta=-rs$.

Conversely, any quantum generalized Heisenberg algebra $\hqfg$ such that $f(h)=ah+b$, with $a, b\in\FF$, is a generalized down-up algebra of the form $L(-g,a,q,-b)$.
\end{prop}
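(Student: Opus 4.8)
The plan is to prove the general isomorphism $L(v,r,s,\gamma)\cong\hh_s(rh-\gamma,-v)$ directly on generators, producing a two-sided inverse so that no basis or dimension computation is required; the two further assertions will then be immediate specializations. Concretely, taking $q=s$, $f(h)=rh-\gamma$ and $g=-v$, I claim that the assignment $d\mapsto y$, $u\mapsto x$, $h\mapsto h$ extends to an algebra homomorphism $\phi\colon L(v,r,s,\gamma)\to\hh_s(rh-\gamma,-v)$, and that $x\mapsto u$, $y\mapsto d$, $h\mapsto h$ extends to a homomorphism $\psi$ in the opposite direction. Since both algebras are given by generators and relations, it suffices to check that $\phi$ and $\psi$ each send every defining relation of their source to $0$; as $\phi$ and $\psi$ are manifestly mutually inverse on the generating sets, they are then mutually inverse isomorphisms, with no separate injectivity or surjectivity argument needed.

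First I would verify that $\phi$ is well defined. Under $d\mapsto y$, $u\mapsto x$, $h\mapsto h$, the three defining relations of $L(v,r,s,\gamma)$ are sent to $yh-rhy+\gamma y$, $hx-rxh+\gamma x$ and $yx-sxy+v(h)$. Invoking the defining relations $yh=(rh-\gamma)y$, $hx=x(rh-\gamma)$ and $yx-sxy=-v(h)$ of $\hh_s(rh-\gamma,-v)$, each of these three expressions collapses to $0$. Symmetrically, for $\psi$ I would observe that the relations $hx=x(rh-\gamma)$, $yh=(rh-\gamma)y$ and $yx-sxy=-v(h)$, rewritten as $hx-rxh+\gamma x=0$ and so on, are carried under $x\mapsto u$, $y\mapsto d$, $h\mapsto h$ to precisely the three defining relations of $L(v,r,s,\gamma)$, hence to $0$. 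This establishes the first and most general claim of the proposition.

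The remaining assertions are then pure parameter bookkeeping. For the converse statement, given $\hqfg$ with $f(h)=ah+b$, I would set $s=q$, $r=a$, $\gamma=-b$ and $v=-g$ in the isomorphism just proved; then $rh-\gamma=ah+b$ and $-v=g$, so $L(-g,a,q,-b)\cong\hh_q(ah+b,g)$, as claimed. For the down-up algebra case, I would combine the general isomorphism with the identification $A(\alpha,\beta,\gamma)\simeq L(-h,r,s,-\gamma)$, for $\alpha=r+s$ and $\beta=-rs$, already recorded in the excerpt: applying the general result with $v=-h$ and with the fourth parameter equal to $-\gamma$ yields $L(-h,r,s,-\gamma)\cong\hh_s(rh+\gamma,h)$, whence $A(\alpha,\beta,\gamma)\cong\hh_s(rh+\gamma,h)$.

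I do not anticipate a genuine obstacle: the entire argument is a matching of two finite presentations. The only point demanding care is fixing the correct dictionary at the outset, namely the generator correspondence $d\leftrightarrow y$ and $u\leftrightarrow x$ together with the parameter identifications $f(h)=rh-\gamma$ and $g=-v$; a sign slip here, or swapping the roles of the two non-central generators, is the one thing that could derail the computation. Once this dictionary is fixed, the verification that defining relations are sent to defining relations is immediate and purely formal.
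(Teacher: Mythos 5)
Your proof is correct, and it is essentially the argument the paper intends: the paper states Proposition~\ref{P:intro:gduaRqgha} as an ``important observation'' with no written proof, precisely because it amounts to the routine matching of presentations that you carry out. Your dictionary $d\mapsto y$, $u\mapsto x$, $h\mapsto h$ with $q=s$, $f(h)=rh-\gamma$, $g=-v$ checks out on all three relations (as does the inverse map), and the two specializations---to $A(\alpha,\beta,\gamma)$ via $A(\alpha,\beta,\gamma)\simeq L(-h,r,s,-\gamma)$, and to the converse with $r=a$, $\gamma=-b$, $v=-g$---are exactly the parameter bookkeeping the statement calls for.
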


In view of this result and the fact that the generalized down-up algebras have been extensively studied, we will often focus on quantum generalized Heisenberg algebras $\hqfg$ with $\degg f>1$.
In fact, in~\cite{LR20pr} we shall see that if a quantum generalized Heisenberg algebra $\hqfg$ is isomorphic to a generalized down-up algebra, then necessarily $\degg f\leq1$.

\subsection{Isomorphisms among quantum generalized Heisenberg algebras}\label{SS:intro:isos}

The isomorphism problem for quantum generalized Heisenberg algebras requires technical results concerning the noetherianity of the qGHA and will be tackled in \cite{LR20pr}. It will be proved there that the isomorphism relation can be phrased in very concrete geometric terms, very much like in \cite{BJ01}. It will follow in particular that, in case $q\neq 0$ and $\degg f>1$, the parameter $q$, as well as the integers $\degg f$ and $\degg g$, are isomorphism invariant, showing that qGHA are indeed a vast generalization of generalized Heisenberg algebras and generalized down-up algebras.
However, our results in the present paper on the center and the description of the finite-dimensional simple modules are enough to show that qGHA are a much broader class which properly contains all generalized Heisenberg algebras and down-up algebras, as their finite-dimensional irreducible representations depend heavily on $q$, $f$ and $g$.

\subsection{Organization of the paper}\label{SS:intro:org}

In Section~\ref{S:basic} we relate quantum generalized Heisenberg algebras to known constructions, such as Ore extensions, ambiskew polynomial rings and generalized Weyl algebras. From these we deduce the basic properties of the algebras $\hqfg$, including a PBW-type basis, necessary and sufficient conditions for  $\hqfg$ to be a domain, as well as a description of the center, under a few restraints.

The main results of this paper are in Section~\ref{S:irreps}, where we classify, up to isomorphism, all finite-dimensional simple representations of $\hqfg$, assuming only that $q\neq 0$ and that the base field is algebraically closed, although of arbitrary characteristic. We also explicitly describe all possible isomorphisms between these modules.

We conclude the paper with a series of questions for further study, in Section~\ref{S:concluding}.

\subsection{Conventions and notation}\label{SS:intro:notation}

Throughout the paper, unless otherwise noted, $\FF$ will denote an arbitrary field, with multiplicative group denoted by $\FF^*$ and algebraic closure $\overline\FF$. The integers, nonnegative integers and positive integers will be denoted by $\ZZ$, $\ZZ_{\geq 0}$ and $\ZZ_{>0}$, respectively. Given a set $E$, the identity map on $E$ will be denoted by $1_E$. If $A$ is a ring, the center of $A$ is denoted $\Z(A)$ and $\mathsf{C}_A(a)$ denotes the centralizer of $a$ in $A$. An element $\theta\in A$ is \textit{normal} if $\theta A=A\theta$.

The relation $hx=xf(h)$ implies that $hx^2=x^2 f(f(h))$ and similarly for higher powers of $x$. To avoid confusion with differential or multiplicative notation, we denote compositional powers of a polynomial as follows:
\begin{equation}
\underbrace{p(p(\cdots (p}_{\text{$k$ times}}(h))))=p^{[k]}(h),
\end{equation}
where $p\in\FF[h]$. Let $\sigma$ be the unital algebra endomorphism of $\FF[h]$ which maps $h$ to $f(h)$. Then $f^{[k]}(h)=\sigma^k(h)$ and we also use the latter notation. Thus, for example, $hx^k=x^k \sigma^k(h)=x^k f^{[k]}(h)$, for all $k\geq 0$.

Whenever possible, we will exploit the symmetry between $x$ and $y$ in the defining relations \eqref{E:intro:def:qgha}. This can be made precise by noting that there is an anti-automorphism of order 2, $\iota:\hqfg\longrightarrow\hqfg$, fixing $h$ and interchanging $x$ and $y$. Applying $\iota$ to an equation in $\hqfg$ will reverse the roles of $x$ and $y$ at the cost of inverting the order of multiplication.

\bigskip

\noindent  {\bf Acknowledgments:}  \  We thank David Jordan for pointing out the contents of Remark~\ref{R:dJ}.

\section{The structure of quantum generalized Heisenberg algebras}\label{S:basic}

in this section we relate qGHA with ambiskew polynomial rings and weak generalized Weyl algebras, describe bases, gradings and the center, and determine when $\hqfg$ is a domain.

\subsection{Quantum generalized Heisenberg algebras and ambiskew polynomial rings}\label{SS:basic:ambiskew}

As seen above, quantum generalized Heisenberg algebras define a vast class of algebras which includes as a special case the (generalized) down-up algebras. The natural setting for constructing and studying qGHA is that of ambiskew polynomial rings, as defined by Jordan over a series of papers (see \cite{dJ00}, \cite{JW13}, \cite{dJ19} and the references therein). Indeed, in \cite{dJ00} Jordan adjusts the original definition of an ambiskew polynomial ring precisely to include the non-noetherian down-up algebras. 

The construction of an ambiskew polynomial ring involves a 2-step Ore extension of non-automorphism type, where on the first step the coefficients are added on the left and then, on the second step, on the right (the reverse construction is possible as well). Since the theory of Ore extension is not essential in this paper, we omit their definition and well-known properties, referring the interested reader to \cite{GW89}.

The definition of an ambiskew polynomial ring which suits our needs is \cite[Sec.\ 7]{dJ00} (see also \cite{dJ19}) and it follows immediately from this definition that $\hqfg$ is precisely an ambiskew polynomial ring over $\FF[h]$, associated with the algebra endomorphism $\sigma: h\mapsto f(h)$. Thus we get the following, which generalizes \cite[Lemma 1]{LZ15}.

\begin{lemma}\label{L:basic:basis}
The quantum generalized Heisenberg algebra $\hqfg$ is an ambiskew polynomial ring of endomorphism type over $\FF[h]$. For any basis $\pb{v_j}_{j\in \ZZ_{\geq 0}}$ of $\FF[h]$, the set $\pb{x^i v_j y^k \mid i, j, k\in \ZZ_{\geq 0}}$ is a basis of $\hqfg$. In particular, for each $j\geq 0$, we can take $v_j$ to be any polynomial in $h$ of degree $j$.
\end{lemma}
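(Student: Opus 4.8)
The plan is to exhibit $\hqfg$ concretely as a two-step iterated Ore extension over $\FF[h]$, thereby matching Jordan's definition of an ambiskew polynomial ring \cite[Sec.~7]{dJ00}, and then to read off the basis from the freeness of Ore extensions as modules. Set $R=\FF[h]$ and let $\sigma$ be the endomorphism $h\mapsto f(h)$; note that $\sigma$ is an automorphism precisely when $\degg f=1$, so in general we are forced into the ``endomorphism type'' version of the construction. First I would adjoin $y$ with coefficients on the left, forming the skew polynomial ring $S=R[y;\sigma]$, whose relation $yr=\sigma(r)y$ specializes to $yh=f(h)y$. Then I would adjoin $x$ with coefficients on the right, i.e.\ realize $\hqfg\cong S[x;\tau,\delta]$ with rule $sx=x\tau(s)+\delta(s)$, where $\tau$ is the endomorphism of $S$ extending $\sigma$ by $\tau(y)=qy$ and $\delta$ is the $\tau$-derivation determined by $\delta(h)=0$ and $\delta(y)=g(h)$; these choices are exactly what turn $sx=x\tau(s)+\delta(s)$ into the relations $hx=xf(h)$ and $yx=qxy+g(h)$.

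The key verification is that this second extension is well defined, i.e.\ that $\tau$ really is an endomorphism of $S$ and $\delta$ really is a $\tau$-derivation, which amounts to checking both against the single defining relation $yh=f(h)y$ of $S$. For $\tau$ one compares $\tau(y)\tau(h)=qy\,f(h)=q f^{[2]}(h)y$ with $\tau(f(h))\tau(y)=f^{[2]}(h)\,qy$, using $y\,f(h)=\sigma(f(h))y$ inside $S$; for $\delta$ one checks the Leibniz identity $\delta(rs)=\delta(r)\sigma(s)+r\delta(s)$ on the two sides of $yh=f(h)y$, both of which evaluate to $g(h)f(h)$ by commutativity in $\FF[h]$. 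Once these routine identities are confirmed, $\hqfg$ is literally an ambiskew polynomial ring of endomorphism type over $\FF[h]$ associated with $\sigma$, which establishes the first assertion.

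For the basis, I would first note that the defining relations of $\hqfg$ hold in the iterated Ore extension $E=S[x;\tau,\delta]$, so the universal property yields a surjective homomorphism $\phi\colon\hqfg\twoheadrightarrow E$ sending generators to generators. Next I would show that the monomials $x^i v_j y^k$ span $\hqfg$: starting from an arbitrary word in $x,y,h$, the rewriting rules $hx\to xf(h)$, $yh\to f(h)y$ and $yx\to qxy+g(h)$ push every $x$ to the left, every $y$ to the right, and every power of $h$ into the middle, the last rule also producing strictly shorter terms, so a standard well-founded induction on a suitable (length, inversion) measure shows the process terminates. Finally, linear independence is inherited from $E$: as a right $S$-module $E$ is free on $\pb{x^i}_{i\in\ZZ_{\geq 0}}$, and as a left $\FF[h]$-module $S$ is free on $\pb{y^k}_{k\in\ZZ_{\geq 0}}$, so for any $\FF$-basis $\pb{v_j}$ of $\FF[h]$ the family $\pb{x^i v_j y^k}$ is an $\FF$-basis of $E$. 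Since the spanning images $\phi(x^i v_j y^k)=x^i v_j y^k$ are independent in $E$, the monomials are independent in $\hqfg$, hence form a basis, and $\phi$ is an isomorphism; the ``in particular'' clause then follows by choosing each $v_j$ of degree $j$.

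The only genuinely delicate point I anticipate is the bookkeeping of left/right conventions so that the coefficient $v_j$ lands between the $x$-powers and the $y$-powers: the first step must add coefficients on the left and the second on the right, and the non-invertibility of $\sigma$ when $\degg f\neq 1$ means I must remain within the endomorphism (rather than automorphism) framework throughout, never invoking $\sigma^{-1}$. The consistency checks for $\tau$ and $\delta$ are short, and the reordering and freeness arguments are standard for iterated Ore extensions, so beyond aligning the conventions with \cite[Sec.~7]{dJ00} I expect no substantial obstacle.
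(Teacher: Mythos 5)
Your proposal is correct and follows essentially the same route as the paper: the paper's proof consists precisely of observing that $\hqfg$ is an ambiskew polynomial ring in the sense of \cite[Sec.~7]{dJ00}, i.e.\ the two-step iterated Ore extension (left coefficients for $y$, right coefficients for $x$, endomorphism type since $\sigma$ need not be invertible) that you construct explicitly, with the basis then read off from freeness exactly as you do. Your write-up merely fills in the routine verifications that the paper delegates to the citation (well-definedness of $\tau$ and $\delta$ --- note the twist in your Leibniz rule should be $\tau$ rather than $\sigma$, harmless since they agree on $\FF[h]$ --- plus spanning and the pull-back of linear independence along $\phi$), so there is no substantive difference in approach.
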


It should be remarked that Lemma~\ref{L:basic:basis} implies that $x$ is not a left zero divisor and that $y$ is not a right zero divisor. Moreover, viewing $\hqfg$ as a 2-step Ore extension (or as an ambiskew polynomial ring) also helps to determine when $\hqfg$ is a domain.

\begin{prop}\label{P:basic:domain}
The qGHA $\hqfg$ is a domain if and only if $q\neq 0$ and  $\degg f\geq 1$.
\end{prop}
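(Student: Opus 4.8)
The plan is to exploit the description of $\hqfg$ as an iterated Ore extension over $\FF[h]$ provided by Lemma~\ref{L:basic:basis}, together with the standard fact that an Ore extension $R[z;\tau,\delta]$ is a domain whenever $R$ is a domain and $\tau$ is an injective endomorphism (by the leading-coefficient argument: the top-degree coefficient of a product is, up to applying a power of $\tau$, the product of the two leading coefficients, hence nonzero). Concretely, I would first form $A=\FF[h][y;\sigma]$, where $\sigma\colon h\mapsto f(h)$ encodes the relation $yh=f(h)y$, and then adjoin $x$ as a second Ore extension $B=A[x;\tau,\delta]$ determined by $a\,x=x\,\tau(a)+\delta(a)$ with $\tau(h)=f(h)$, $\tau(y)=qy$, $\delta(h)=0$ and $\delta(y)=g(h)$. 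A short verification that $\tau$ respects $yh=f(h)y$ (both sides give $q\,f(f(h))\,y$) shows $\tau$ is a well-defined endomorphism of $A$ and that $\delta$ is the associated $\tau$-derivation; matching $hx=x\tau(h)+\delta(h)$ and $yx=x\tau(y)+\delta(y)$ against \eqref{E:intro:def:qgha} identifies $B$ with $\hqfg$.

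For sufficiency, suppose $q\neq0$ and $\degg f\geq1$. Then $\sigma$ is injective, since composing with a nonconstant polynomial multiplies degrees, so $p(f(h))=0$ forces $p=0$; hence $A$ is a domain. Evaluating $\tau$ on the normal form gives $\tau\bigl(\sum_k p_k(h)y^k\bigr)=\sum_k q^k\,p_k(f(h))\,y^k$, which vanishes only if each $p_k=0$ once $q\neq0$ and $\sigma$ is injective. Thus $\tau$ is injective and $B=\hqfg$ is a domain.

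For necessity I would prove the contrapositive by exhibiting explicit zero divisors. If $\degg f\leq0$, then $f$ is a constant $c$ and $yh=f(h)y$ becomes $y(h-c)=yh-cy=cy-cy=0$, with $y\neq0$ and $h-c\neq0$ by the PBW basis. If instead $q=0$ (and $\degg f\geq1$), the cross relation reads $yx=g(h)$; when $g=0$ this is simply $yx=0$. The remaining case $q=0$, $g\neq0$, $\degg f\geq1$ is the main obstacle, as zero divisors are no longer evident. Here I would seek $w=p_0(h)+x\,r_1(h)y$ with $yw=0$: using $y\,p_0(h)=p_0(f(h))y$ and $yx=g(h)$, this collapses to the single condition $p_0(f(h))=-g(h)\,r_1(h)$ in $\FF[h]$, i.e.\ $g(h)\mid p_0(f(h))$. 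Such a nonzero $p_0$ exists because the powers $f(h)^k$ ($k\geq0$) have linearly dependent images in the finite-dimensional algebra $\FF[h]/(g(h))$, yielding a nontrivial relation $p_0(f(h))\equiv0\pmod{g(h)}$. Then $w\neq0$ while $yw=0$, so $\hqfg$ is not a domain; this completes the equivalence.
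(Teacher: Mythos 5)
Your proposal is correct and takes essentially the same route as the paper: sufficiency via the iterated Ore extension structure with injective twisting endomorphisms (which the paper simply cites from the theory of ambiskew polynomial rings and you spell out), and necessity via explicit zero divisors, where in the hard case $q=0$, $g\neq 0$, $\degg f\geq 1$ your element $p_0(h)+x\,r_1(h)y$ with $g\mid p_0(f(h))$ is, up to sign, exactly the paper's $xP_1y-P_0$ with $gP_1=\sigma(P_0)$, both obtained from the finite-dimensionality of $\FF[h]/(g(h))$. The only cosmetic differences are that the paper exhibits $(h-f)x=0$ rather than $y(h-c)=0$ in the constant-$f$ case, and phrases the existence of $p_0$ via $\im\,\sigma\cap (g(h))\neq\{0\}$ rather than linear dependence of the powers of $f$ modulo $g$.
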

\begin{proof}
Suppose first that $q\neq 0$ and $\degg f\geq 1$. Then it follows from the theory of Ore extensions that $\hqfg$ is a domain (compare~\cite[Sec.\ 4, par.\ 1]{dJ19}). Next, we show that these conditions are necessary for $\hqfg$ to be a domain.

If $f\in\FF$, then $h-f\neq 0$ and $(h-f)x=xf-fx=0$, so $\hqfg$ is not a domain in this case. Now assume that $q=0$ and $f\not\in\FF$. Then we have the relation $yx=g(h)$ so if $g=0$ we again deduce that $\hqfg$ is not a domain. Suppose thus that $g\neq 0$. 

Let $\pi\circ\sigma:\FF[h]\longrightarrow\FF[h]/(g(h))$ be the composition of $\sigma$ with the canonical surjection $\pi:\FF[h]\longrightarrow\FF[h]/(g(h))$, where $(g(h))=g(h)\FF[h]$ is the ideal generated by $g$. Since $g\neq 0$, the quotient $\FF[h]/(g(h))$ is finite dimensional. On the other hand, since $\degg f\geq 1$, the map $\sigma$ is injective and in particular its image $\im\, \sigma$ is infinite dimensional. This implies that $\im\, \sigma\cap (g(h))\neq \{0\}$. Hence, there exist nonzero polynomials $P_0, P_1\in\FF[h]$ with $gP_1=\sigma(P_0)$. Thus,
\begin{equation*}
y(xP_1y-P_0)=yxP_1y-yP_0=(gP_1-\sigma(P_0))y=0.
\end{equation*}
By Lemma~\ref{L:basic:basis}, both $y$ and $xP_1y-P_0$ are nonzero, so $\hqfg$ is not a domain.  
\end{proof}

\begin{remark}\label{R:dJ}
In \cite{dJ19} Jordan studies simplicity criteria for algebras of this type although, generally speaking, quantum generalized Heisenberg algebras are not simple (compare \cite[Lem.\ 2.9, Thm.\ 3.8]{dJ19}). However, had we instead been working over the rational function field $\FF(h)$ with $\FF$ of characteristic $0$, $q\neq 0$, $f=h^a$, $g=h^b$ and $a\geq b$, then we would obtain a simple algebra. 
\end{remark}

\subsection{Quantum generalized Heisenberg algebras and (weak) generalized Weyl algebras}\label{SS:basic:wGWA}

Another prominent class of algebras arising in connection with Lie theory and quantum algebras is that of generalized Weyl algebras. These were defined by Bavula in \cite{vb91, vb92} and subsequently studied extensively by himself and many other authors. The references are too many to mention but we single out the work \cite{LMZ15} as it most directly pertains to our setting. 

In its simplest form, a generalized Weyl algebra is an algebra attached to the following data: a unital associative algebra $A$, an algebra automorphism $\psi$ of $A$ and a central element $t\in\Z(A)$. The generalized Weyl algebra $A(\psi, t)$ is the associative algebra generated over $A$ by elements $x, y$, subject to the relations:
\begin{equation}\label{E:basic:def:gwa}
ax=x\psi(a), \quad\quad ya=\psi(a)y, \quad\quad xy=t \quad \mbox{and}\quad yx=\psi(t), \quad\quad\mbox{for all $a\in A$.}
\end{equation}
In \cite{LMZ15} the term \textit{weak} generalized Weyl algebra was coined to refer to an algebra $A(\psi, t)$ as above, but such that $\psi$ is only assumed to be a (unital) algebra endomorphism of $A$. Then one can see that $\hqfg$ is a weak generalized Weyl algebra $A(\tilde\sigma, t)$ over the polynomial ring $A=\FF[h, t]$, taking $\tilde\sigma$ to be the endomorphism defined by $\tilde\sigma(h)=f(h)$ and $\tilde\sigma(t)=qt +g(h)$. Indeed, the relation $xy=t$ from \eqref{E:basic:def:gwa} makes the variable $t$ from the polynomial ring $A=\FF[h, t]$ redundant and then the relation $yx=\tilde\sigma(t)=qt+g(h)$ becomes $yx=qxy+g(h)$, the third relation in the definition of $\hqfg$ in \eqref{E:intro:def:qgha}.

Generalized Weyl algebras carry a natural $\ZZ$-grading obtained by setting $x$ in degree $1$, the elements of $A$ in degree $0$ and $y$ in degree $-1$. We use the term \textit{weight} to refer to the degree of a homogeneous element with respect to this grading. It is clear that this grading carries over to weak generalized Weyl algebras and in particular we see that $\hqfg$ inherits this $\ZZ$-grading:
\begin{equation}\label{E:basic:weight_space_dec}
\hqfg=\bigoplus_{k\in\ZZ} \hqfg_k, 
\end{equation}
where, for $k\geq 0$,
\begin{equation*}
 \hqfg_0=\bigoplus_{i\geq 0}x^i \FF[h]y^i, \quad  \hqfg_k=x^k\, \hqfg_0 \quad \mbox{and}\quad \hqfg_{-k}=\hqfg_0\, y^{k}.
\end{equation*}

\subsection{Conformality and the center of $\hqfg$}\label{SS:basic:center}

In \cite{JW96}, Jordan and Wells introduced the notion of conformal ambiskew polynomial rings, although some care is needed because this is not an isomorphism-invariant property, as remarked in \cite{CL09}. Viewing quantum generalized Heisenberg algebras as ambiskew polynomial rings, as above, we say that $\hqfg$ is \textit{conformal} if there is some $a\in \FF[h]$ such that $g(h)=\sigma(a)-qa$. 

Conformality has important implications. Under this assumption, define $Z=q(xy-a)=yx-\sigma(a)$. Then the following easy computations show that $Z$ is normal:
\begin{equation}\label{E:basic:center:Zcomm}
hZ=Zh, \quad Zx=qxZ, \quad \mbox{and}\quad yZ=qZy. 
\end{equation}
If in addition $q=1$, then the above shows that $Z$ is central. For example, the generalized Heisenberg algebras $\mathcal{H}(f)$ are always conformal, as by definition these are of the form $\mathcal{H}_1(f, f(h)-h)$ so we can take $a=h$ and thence $Z$ is central in $\mathcal{H}(f)$. In contrast, we will see shortly that quantum generalized Heisenberg algebras can have a trivial center.

\begin{lemma}\label{L:basic:conformal}
Let $\hqfg$ be a quantum generalized Heisenberg algebra and $k\geq 0$. Set $\theta_k=\sum_{i=0}^{k-1}q^i \sigma^{k-1-i}(g)$, where by convention $\theta_0=0$. Then the following hold:
\begin{enumerate}[label={(\alph*)}]
\item $yx^k=q^kx^ky+x^{k-1}\theta_k$ and $y^k x=q^kx y^k+\theta_k y^{k-1}$.\label{L:basic:conformal:a}
\item For any $p(h)\in\FF[h]$, $(x^k p(h)y^k)x=x(q^k x^k\sigma(p(h))y^k + x^{k-1}p(h)\theta_k y^{k-1})$ and $y(x^k p(h)y^k)=(q^k x^k\sigma(p(h))y^k + x^{k-1}p(h)\theta_k y^{k-1})y$.\label{L:basic:conformal:b}
\item If $g(h)=\sigma(a)-qa$, for some $a\in \FF[h]$, then $\theta_k=\sigma^k(a)-q^ka$.\label{L:basic:conformal:c}
\end{enumerate}
\end{lemma}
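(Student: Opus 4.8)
The plan is to establish part~\ref{L:basic:conformal:a} first, by induction on $k$, since parts~\ref{L:basic:conformal:b} and~\ref{L:basic:conformal:c} will then follow with little additional effort. The workhorse throughout is the commutation rule $p(h)x=x\sigma(p(h))$, valid for every $p\in\FF[h]$, which is the polynomial extension of the defining relation $hx=xf(h)$ (obtained from $\sigma(p(h))=p(f(h))$). I would prove only the first identity in each part directly; the second is then immediate by applying the order-$2$ anti-automorphism $\iota$ that fixes $h$ and swaps $x$ and $y$. Indeed, since $\iota(x^k)=y^k$, $\iota(y^k)=x^k$ and $\iota(\theta_k)=\theta_k$ (as $\theta_k\in\FF[h]$), applying $\iota$ to $yx^k=q^kx^ky+x^{k-1}\theta_k$ reverses the order of every product and yields exactly $y^kx=q^kxy^k+\theta_ky^{k-1}$.

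For the induction in~\ref{L:basic:conformal:a}, the base case $k=0$ reads $y=y$ (the term $x^{-1}\theta_0$ being $0$ by the convention $\theta_0=0$), and $k=1$ recovers the defining relation $yx=qxy+g$, since $\theta_1=g$. For the inductive step I would compute
\[
yx^{k+1}=(yx^k)x=(q^kx^ky+x^{k-1}\theta_k)x=q^kx^k(yx)+x^{k-1}(\theta_k x),
\]
then substitute $yx=qxy+g$ and use the commutation rule in the form $\theta_k x=x\sigma(\theta_k)$. Collecting terms gives $yx^{k+1}=q^{k+1}x^{k+1}y+x^k\bigl(q^kg+\sigma(\theta_k)\bigr)$, so it remains to identify $q^kg+\sigma(\theta_k)$ with $\theta_{k+1}$. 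This is the one genuinely bookkeeping-heavy point: applying $\sigma$ to the defining sum gives $\sigma(\theta_k)=\sum_{i=0}^{k-1}q^i\sigma^{k-i}(g)$, and adding $q^kg=q^k\sigma^0(g)$ supplies precisely the missing $i=k$ term of $\theta_{k+1}=\sum_{i=0}^{k}q^i\sigma^{k-i}(g)$.

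With~\ref{L:basic:conformal:a} in hand, part~\ref{L:basic:conformal:b} is a direct substitution: I would write $(x^kp(h)y^k)x=x^kp(h)(y^kx)$, insert $y^kx=q^kxy^k+\theta_ky^{k-1}$ from~\ref{L:basic:conformal:a}, and move $p(h)$ past the single $x$ via $p(h)x=x\sigma(p(h))$; factoring an $x$ on the left then matches the claimed right-hand side, and the second identity follows by $\iota$ as above. Finally, part~\ref{L:basic:conformal:c} is a telescoping computation: substituting $g=\sigma(a)-qa$ into the sum defining $\theta_k$ gives $\sum_{i=0}^{k-1}\bigl(q^i\sigma^{k-i}(a)-q^{i+1}\sigma^{k-1-i}(a)\bigr)$, whose consecutive terms cancel to leave $\sigma^k(a)-q^ka$; alternatively one may verify that this expression satisfies the recursion $\theta_{k+1}=q^kg+\sigma(\theta_k)$ found above, together with $\theta_0=0$. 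I expect no conceptual obstacle here---everything reduces to the single commutation rule together with the $\iota$-symmetry---so the only real care needed is in the degenerate $k=0$ term and in the reindexing that identifies the sums.
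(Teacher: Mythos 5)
Your proposal is correct and takes essentially the same route as the paper: the paper's proof merely notes that parts (a) and (c) ``can readily be proved by induction on $k$'' (citing Jordan) and that (b) follows immediately from (a), which is precisely the induction and substitution you carry out in detail, with the recursion $\theta_{k+1}=q^kg+\sigma(\theta_k)$ as the key bookkeeping step. Your use of the anti-automorphism $\iota$ to deduce the second identity in each part is a clean implementation of the $x$--$y$ symmetry the paper sets up in its conventions.
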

\begin{proof}
Parts \ref{L:basic:conformal:a} and \ref{L:basic:conformal:c} appear in \cite[Sec.\ 2.4]{dJ00} and can readily be proved by induction on $k$; part \ref{L:basic:conformal:b} follows immediately from \ref{L:basic:conformal:a}.
\end{proof}

For the remainder of this section we will assume that $\degg f>1$. Otherwise, as we have previously remarked, we obtain a generalized down-up algebra, whose center has been studied elsewhere. This assumption avoids several technical difficulties which have been dealt with already in the literature on 
generalized down-up algebras.

\begin{lemma}\label{L:basic:centralizer:h}
Let $\hqfg$ be a quantum generalized Heisenberg algebra with $\degg f>1$. Then the centralizer of $h$ is $\hqfg_0$.
\end{lemma}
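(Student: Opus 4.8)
The plan is to prove the two inclusions $\hqfg_0\subseteq \mathsf{C}_{\hqfg}(h)$ and $\mathsf{C}_{\hqfg}(h)\subseteq\hqfg_0$ separately, exploiting the $\ZZ$-grading \eqref{E:basic:weight_space_dec}. For the first inclusion I would simply compute on a typical basis element of weight $0$: using $hx^i=x^i\sigma^i(h)$ and $y^ih=\sigma^i(h)y^i$ together with the commutativity of $\FF[h]$, one obtains $h\,(x^ip(h)y^i)=x^i\sigma^i(h)p(h)y^i=x^ip(h)\sigma^i(h)y^i=(x^ip(h)y^i)\,h$, so every element of $\hqfg_0$ centralizes $h$.

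For the reverse inclusion, the key structural observation is that $h$ is homogeneous of weight $0$, so the inner derivation $[h,-]$ preserves the grading \eqref{E:basic:weight_space_dec}; consequently $\mathsf{C}_{\hqfg}(h)$ is a graded subspace and it suffices to show that $\hqfg_k\cap\mathsf{C}_{\hqfg}(h)=\{0\}$ for every $k\neq 0$. Moreover, since the anti-automorphism $\iota$ fixes $h$ and interchanges $x$ and $y$, it sends $\hqfg_k$ to $\hqfg_{-k}$ and satisfies $\iota([h,w])=-[h,\iota(w)]$; hence $w$ centralizes $h$ if and only if $\iota(w)$ does, and it is enough to treat the case $k>0$.

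So I would fix $k>0$ and write a general element of $\hqfg_k$ as $w=\sum_{l\geq 0}x^{k+l}p_l(h)y^l$ (a finite sum, with $p_l\in\FF[h]$). The same computation as above yields
\[
[h,w]=\sum_{l\geq 0}x^{k+l}\,p_l(h)\bigl(\sigma^{k+l}(h)-\sigma^l(h)\bigr)\,y^l.
\]
By Lemma~\ref{L:basic:basis} the summands $x^{k+l}\FF[h]y^l$ are linearly independent for distinct $l$, and since $x$ is not a left zero divisor and $y$ is not a right zero divisor, the condition $[h,w]=0$ forces $p_l(h)\bigl(\sigma^{k+l}(h)-\sigma^l(h)\bigr)=0$ in $\FF[h]$ for each $l$. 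Here is the crux of the argument, and the only place where the hypothesis $\degg f>1$ is used: since $\sigma^m(h)=f^{[m]}(h)$ has degree $(\degg f)^m$, the assumption $\degg f>1$ makes these degrees strictly increasing in $m$, so for $k>0$ the polynomials $\sigma^{k+l}(h)$ and $\sigma^l(h)$ have distinct degrees and in particular $\sigma^{k+l}(h)-\sigma^l(h)\neq 0$. As $\FF[h]$ is a domain, this forces $p_l=0$ for every $l$, whence $w=0$. Combining the two inclusions gives $\mathsf{C}_{\hqfg}(h)=\hqfg_0$. I expect the degree computation to be the main obstacle in the sense that it is exactly where the argument would break down for $\degg f\leq 1$ (where $\sigma$ is affine and $\sigma^{k+l}(h)=\sigma^l(h)$ can occur), which is consistent with the reduction to the generalized down-up case noted earlier.
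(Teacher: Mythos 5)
Your proof is correct and takes essentially the same approach as the paper's: both verify $\hqfg_0\subseteq\mathsf{C}_{\hqfg}(h)$ by direct computation, note that the centralizer of the weight-$0$ element $h$ is a graded subspace, and then rule out nonzero-weight components by showing $\sigma^{k}(h)=\sigma^{\ell}(h)$ forces $k=\ell$ when $\degg f>1$. The only cosmetic differences are that you compare the degrees $(\degg f)^m$ directly and reduce to positive weights via the anti-automorphism $\iota$, whereas the paper cancels using injectivity of $\sigma$ to get $\sigma^{j}(h)=h$ with $j=|k-\ell|$ and notes $j>0$ would force $\degg f=1$.
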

\begin{proof}
First, it is clear that $\hqfg_0\subseteq\mathsf{C}_{\hqfg}(h)$, as $h(x^k p(h)y^k)=x^k \sigma^k(h)p(h)y^k=(x^k p(h)y^k)h$, for all $k\geq 0$ and all $p(h)\in\FF[h]$. It remains to prove the converse inclusion.

Since $h$ is homogenous of weight $0$, its centralizer algebra is necessarily the sum of its weight components. Thus, to finish the proof, it is enough to argue that $h(x^k p(h)y^\ell)=(x^k p(h)y^\ell)h$ for some nonzero $p(h)$ implies that $k=\ell$. So assume that $p(h)\neq 0$ and $h(x^k p(h)y^\ell)=(x^k p(h)y^\ell)h$. Then, using the relations \eqref{E:intro:def:qgha} and Lemma~\ref{L:basic:basis}, we deduce that $\sigma^k(h)p(h)=\sigma^\ell(h)p(h)$. As $\FF[h]$ is a domain, $p(h)\neq 0$ and $\sigma$ is injective (because $\degg f\geq 1$), we deduce that there is $j=|k-\ell|\geq 0$ such that $\sigma^j(h)=h$. If we had $j>0$, the latter would imply that $\degg f= 1$, so indeed $j=0$ and $k=\ell$.
\end{proof}

The following result generalizes \cite[Thm.\ 4]{LZ15}, in case $\degg f>1$.

\begin{prop}\label{P:basic:center:conf}
Let $\hqfg$ be a quantum generalized Heisenberg algebra with $\degg f>1$.
\begin{enumerate}[label={(\alph*)}]
\item If $q$ is not a root of unity, then $\Z(\hqfg)=\FF$.
\item If $q$ is a primitive $\ell$-th root of unity and $\hqfg$ is conformal, with $g(h)=\sigma(a)-qa$, then $\Z(\hqfg)=\FF[Z^\ell]$, where $Z=q(xy-a)$.
\end{enumerate} 
\end{prop}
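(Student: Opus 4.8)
The plan is to compute the center by working weight component by weight component, exploiting the $\ZZ$-grading from \eqref{E:basic:weight_space_dec} together with the centralizer computation in Lemma~\ref{L:basic:centralizer:h}. The first observation is that any central element must centralize $h$, so by Lemma~\ref{L:basic:centralizer:h} it lies in $\hqfg_0=\bigoplus_{i\geq 0}x^i\FF[h]y^i$. This immediately collapses the problem to understanding which elements of $\hqfg_0$ commute with both $x$ and $y$. Because $\iota$ interchanges $x$ and $y$ and fixes $h$, it suffices to analyze commutation with $x$ alone; commutation with $y$ will follow by symmetry, since $\iota$ fixes $\hqfg_0$ setwise.

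For part (a), I would take a general element $w=\sum_{i\geq 0}x^i p_i(h)y^i\in\hqfg_0$ and impose $wx=xw$. Using Lemma~\ref{L:basic:conformal}\ref{L:basic:conformal:b}, each summand $x^i p_i(h)y^i$ satisfies $(x^i p_i(h)y^i)x=x(q^i x^i\sigma(p_i)y^i + x^{i-1}p_i\theta_i y^{i-1})$, so matching weight-$1$ components on both sides of $wx=xw$ yields, for each fixed power $x^{i+1}(\cdots)y^i$, a recursion relating $p_i$ and $p_{i+1}$ via the factor $q^i\sigma(p_i)$ against $\sigma^{i+1}(p_{i+1})$ — roughly $\sigma^{i+1}(p_{i+1})$ must absorb the discrepancy $p_{i+1}-q^i\sigma(p_i)$ coming from the $\theta$-terms. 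The key pressure point is that, because $\degg f>1$, applying $\sigma$ strictly increases degrees, so the only way the degrees can balance across the recursion indefinitely is if the polynomials are forced to vanish. When $q$ is not a root of unity, the scalar factors $q^i$ never conspire to cancel, and one concludes inductively that all $p_i$ with $i\geq 1$ vanish and $p_0$ is constant, giving $\Z(\hqfg)=\FF$.

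For part (b), conformality gives the normal element $Z=q(xy-a)=yx-\sigma(a)$ with the commutation rules \eqref{E:basic:center:Zcomm}: $hZ=Zh$, $Zx=qxZ$, $yZ=qZy$. When $q$ is a primitive $\ell$-th root of unity, $Z^\ell$ then commutes with $h$, $x$ and $y$ (the scalar becomes $q^\ell=1$), so $\FF[Z^\ell]\subseteq\Z(\hqfg)$. For the reverse inclusion I would again start from $w\in\hqfg_0$ with $wx=xw$ and $wy=yw$. The natural strategy is to rewrite $\hqfg_0$ in terms of the commuting variables $h$ and $Z$: since $xy=a+q^{-1}Z$ and more generally $x^i p(h) y^i$ can be re-expressed as a polynomial in $h$ and $Z$ (using $yx=\sigma(a)+Z$ repeatedly, via Lemma~\ref{L:basic:conformal}\ref{L:basic:conformal:c} to track the $\theta_i=\sigma^i(a)-q^i a$ terms), the centralizer $\hqfg_0$ becomes a polynomial ring $\FF[h,Z]$. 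Commutation with $x$ then reads, for any $P(h,Z)\in\FF[h,Z]$, as $P(h,Z)x=xP(f(h),qZ)$, so centrality forces $P(h,Z)=P(f(h),qZ)$. Matching $Z$-degrees and using that $q$ has exact order $\ell$ shows $Z$ may appear only in powers divisible by $\ell$; matching $h$-degrees and using $\degg f>1$ (so that $P(f(h),\cdot)=P(h,\cdot)$ forces $P$ constant in $h$) eliminates all dependence on $h$. This yields $w\in\FF[Z^\ell]$.

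The main obstacle I anticipate is making the re-expression of $\hqfg_0$ as $\FF[h,Z]$ fully rigorous, i.e.\ proving that $\{h^m Z^n\}$ is genuinely a basis of $\hqfg_0$ and that the action of conjugation by $x$ is exactly the substitution $(h,Z)\mapsto(f(h),qZ)$; this requires careful bookkeeping of the $\theta_i$ terms from Lemma~\ref{L:basic:conformal} and a degree/triangularity argument to guarantee that the change of variables from $\{x^i p(h) y^i\}$ to $\{h^m Z^n\}$ is invertible. Once that dictionary is established, the two centrality conditions reduce to the clean polynomial identity $P(h,Z)=P(f(h),qZ)$, and the root-of-unity dichotomy falls out by comparing degrees in $h$ (where $\degg f>1$ is essential) and the multiplicative order of $q$ in the $Z$-variable.
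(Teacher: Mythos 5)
Your reduction to $\hqfg_0$ via Lemma~\ref{L:basic:centralizer:h} and your treatment of part (a) by a leading-term analysis using Lemma~\ref{L:basic:conformal}\ref{L:basic:conformal:b} (the top coefficient satisfies $q^k\sigma(p_k)=p_k$, which for $\degg f>1$ forces $p_k\in\FF^*$ and $q^k=1$, hence $k=0$ when $q$ is not a root of unity) is essentially the paper's argument, and the observation that $\iota$ fixes $\hqfg_0$ pointwise, so that commuting with $x$ already implies commuting with $y$, is sound.

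Part (b), however, contains a genuine gap, and it is exactly the point you flagged as ``the main obstacle'': the identification of $\hqfg_0$ with $\FF[h,Z]$ is \emph{false} under the standing hypothesis $\degg f>1$. The elements $h$ and $Z$ do generate a genuine polynomial subalgebra (the paper's filtration computation $Z^n=q^{n+\lambda_n}x^ny^n+\mathrm{LOT}(n-1)$ shows the $h^mZ^n$ are linearly independent), but this subalgebra is proper. Indeed, since $h^m x^n=x^n f^{[n]}(h)^m$, the image of $h^mZ^n$ in the filtration quotient $\hqfg_{0,n}/\hqfg_{0,n-1}\cong x^n\FF[h]y^n$ is a scalar multiple of $x^n f^{[n]}(h)^m y^n$, and for $n\geq 1$ the powers $f^{[n]}(h)^m$ have degrees lying in $(\degg f)^n\,\ZZ_{\geq 0}$, so they span only a proper subspace of $\FF[h]$; for instance $xhy\notin\FF[h,Z]$. (Note the irony: the same mechanism $\degg f>1$ that powers your part (a) is what destroys the dictionary in part (b); the dictionary would be fine for $\degg f\leq 1$.) Consequently your clean functional equation $P(h,Z)=P(f(h),qZ)$ only constrains central elements that happen to lie in $\FF[h,Z]$, and says nothing about a hypothetical central element in $\hqfg_0\setminus\FF[h,Z]$, so the inclusion $\Z(\hqfg)\subseteq\FF[Z^\ell]$ is not established.

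The repair is to abandon the global change of variables and argue by induction on the filtration $\hqfg_{0,j}=\bigoplus_{i=0}^j x^i\FF[h]y^i$, as the paper does: if $z=\sum_{i=0}^k x^ip_i(h)y^i$ is central with $p_k\neq 0$, the same top-term computation as in part (a) gives $p_k=\lambda\in\FF^*$ and $q^k=1$, hence $\ell\mid k$; since $Z^k=q^{k+\lambda_k}x^ky^k+\mathrm{LOT}(k-1)$ and $Z^k=(Z^\ell)^{k/\ell}$ is already known to be central, one can choose $\mu\in\FF^*$ with $z-\mu Z^k\in\hqfg_{0,k-1}$, and this difference is again central, so induction finishes the proof. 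This sidesteps entirely the (false) claim that $\pb{h^mZ^n}$ is a basis of $\hqfg_0$: one only needs the top term of each $Z^k$ with $\ell\mid k$ to match the forced top term of a central element.
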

\begin{proof}
Since $\Z(\hqfg)\subseteq \mathsf{C}_{\hqfg}(h)$, Lemma~\ref{L:basic:centralizer:h} shows that $\Z(\hqfg)\subseteq \hqfg_0$.
Thus, assume that $z=\sum_{i=0}^k x^i p_i(h)y^i$ is central, with $p_i(h)\in\FF[h]$ for all $i$ and $p_k(h)\neq 0$. Using Lemma~\ref{L:basic:conformal}\ref{L:basic:conformal:b} and comparing the decompositions of $xz$ and $zx$ in the basis $\pb{x^i h^j y^k}_{i, j, k}$, we deduce that $q^k\sigma(p_k(h))=p_k(h)$. This immediately implies that $p_k(h)\in\FF^*$ and $q^k=1$.
In particular, if $q$ is not a root of unity, then the above forces $k=0$ and $z\in\FF^*$, so in this case $\Z(\hqfg)=\FF$.

Now, assume that $q$ is a primitive $\ell$-th root of unity, for some $\ell>0$ and that $\hqfg$ is conformal, with $g(h)=\sigma(a)-qa$ and $Z=q(xy-a)$. Then the relations \eqref{E:basic:center:Zcomm} imply that $Z^\ell$ is central.

Let us introduce the notation $\hqfg_{0, j}=\bigoplus_{i=0}^{j} x^i \FF[h]y^i$, for $j\geq 0$. By Lemma~\ref{L:basic:conformal}\ref{L:basic:conformal:a}, these subspaces form an increasing filtration of $\hqfg_{0}$. Given $\alpha, \beta\in \hqfg_{0}$, we will write $\alpha=\beta+\mathrm{LOT}(j)$ if $\alpha-\beta\in \hqfg_{0, j}$. Then $(xy)^j=q^{\lambda_j}x^jy^j +\mathrm{LOT}(j-1)$, where $\lambda_j=\frac{j(j-1)}{2}$. Since $xy$ and $a\in\FF[h]$ commute, the binomial theorem implies that $Z^j=q^{j+\lambda_j}x^jy^j + \mathrm{LOT}(j-1)$, for all $j\geq 0$. As $q\neq 0$, it follows that the powers of $Z$ are linearly independent and $\FF[Z^\ell]$ is indeed a polynomial algebra. Hence, we have established the inclusion $\Z(\hqfg)\supseteq \FF[Z^\ell]$.

Let $z=\sum_{i=0}^k x^i p_i(h)y^i$ be as above and assume that $z$ commutes with $x$. We will show by induction on $k$ that $z\in\FF[Z^\ell]$. If $k=0$, then we had already observed that $zx=xz$ implies that $z\in\FF^*\subseteq \FF[Z^\ell]$. Suppose now that $k\geq 1$ and that the claim holds for smaller values of $k$. Since $zx=xz$, we can conclude, as above, that $q^k=1$ and $p_k(h)=\lambda\in\FF^*$. In particular, $z=\lambda x^ky^k+\mathrm{LOT}(k-1)$. Therefore, there is $\mu\in\FF^*$ such that $z-\mu Z^k\in \hqfg_{0, k-1}$. As $q^k=1$, $\ell$ divides $k$ and $Z^k=\left(Z^\ell\right)^{k/\ell}\in \FF[Z^\ell]\subseteq \Z(\hqfg)$. Hence, $z-\mu Z^k$ commutes with $x$ and the induction hypothesis applies, so that $z-\mu Z^k\in\FF[Z^\ell]$. Thence, $z\in\FF[Z^\ell]$ and the induction is complete.
\end{proof}

\section{The finite-dimensional simple $\hqfg$-modules}\label{S:irreps}

In this section we completely classify the finite-dimensional simple $\hqfg$-modules for all polynomials $f, g\in\FF[h]$, assuming only that $q\neq 0$ and $\FF$ is algebraically closed, which we abbreviate by writing $\FF=\overline\FF$. In particular, this study generalizes and unifies the classification of finite-dimensional simple modules over down-up algebras, generalized down-up algebras and generalized Heisenberg algebras, which has been carried out over the series of references \cite{BR98}, \cite{CM00}, \cite{CS04}, \cite{LZ15}, \cite{spS90}, \cite{lLB95} and \cite{sR02}, to name just a few. 

Our main results are Theorem~\ref{T:irreps:fdiml:all}, which classifies finite-dimensional simple modules over qGHA, and the results in Subsection~\ref{SS:irreps:isos} describing the isomorphisms between those. The methods we use are akin to the ones in \cite{LZ15}, although we work over a field of arbitrary characteristic and now the dynamics of the action also involves both the deformation parameter $q$ and the polynomial $g$. One crucial difference between our approach and that in \cite{LZ15} is that, in the latter, the existence of a non-trivial central element (whose existence is guaranteed by the conformality of generalized Heisenberg algebras, but which does not exist in general for qGHA, see Subsection~\ref{SS:basic:center}) is used in a strong way to capture results about the simple representations, by way of Schur's lemma. In the more general setting of qGHA, the center may be trivial (see Proposition~\ref{P:basic:center:conf}), so we need other methods to handle that shortfall.

Throughout this section we suppose that $\FF=\overline\FF$ and fix the parameter $q\neq 0$ and the polynomials $f, g\in\FF[h]$.

\subsection{Doubly-infinite weight $\hqfg$-modules}\label{SS:irreps:Verma}

As in \cite{LZ15}, we set 
\begin{equation*}
\Sf=\pb{\lambda:\ZZ\longrightarrow\FF\mid f(\lambda(i))=\lambda(i+1), \ \mbox{for all $i\in\ZZ$}}
\end{equation*}
and, for $\lambda\in\Sf$, let $|\lambda|\geq 0$ be the generator of the additive subgroup 
\begin{equation*}
\pb{k\in\ZZ\mid \lambda(i)=\lambda(i+k), \ \mbox{for all $i\in\ZZ$}} \subseteq\ZZ.
\end{equation*}
By \cite[Lem.\ 6]{LZ15}, if $|\lambda|\neq 0$, then $\lambda(i)=\lambda(j)\iff$ $|\lambda|$ divides $i-j$. 

Given $\lambda\in\Sf$, we define
\begin{equation*}
\Tf=\pb{\mu:\ZZ\longrightarrow\FF\mid \mu(i+1)=q\mu(i)+g(\lambda(i)), \ \mbox{for all $i\in\ZZ$}}.
\end{equation*}

The next result shows in particular that the set $\Tf$ can be parametrized by $\FF$.

\begin{lemma}\label{L:irreps:Verma:mu}
Suppose $q\neq 0$. Let $\lambda\in \Sf$, $k\in\ZZ$ and $\alpha\in\FF$. 
\begin{enumerate}[label=(\alph*)]
\item Define $\mu:\ZZ\longrightarrow\FF$ by
\begin{equation}\label{E:irreps:Verma:mu}
\mu(i+k)=\begin{cases}
q^i\alpha+\sum\limits_{j=0}^{i-1}q^j g(\lambda(k+i-j-1)) & \mbox{if $i\geq 0$;}\\[10pt]
q^i\alpha-\sum\limits_{j=i}^{-1}q^jg(\lambda(k+i-j-1)) & \mbox{if $i\leq 0$.} 
\end{cases} 
\end{equation}
Then $\mu\in\Tf$ and $\mu(k)=\alpha$.\label{L:irreps:Verma:mu:a}
\item Conversely, suppose $\mu\in\Tf$ and $\mu(k)=\alpha$. Then $\mu$ is given by \eqref{E:irreps:Verma:mu} above.\label{L:irreps:Verma:mu:b}
\end{enumerate}
\end{lemma}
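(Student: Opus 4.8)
The plan is to prove part \ref{L:irreps:Verma:mu:a} by direct verification that the explicit formula \eqref{E:irreps:Verma:mu} satisfies the two defining conditions of $\Tf$, and then to obtain part \ref{L:irreps:Verma:mu:b} almost for free as a uniqueness statement, the point being that the condition defining $\Tf$ is a first-order linear recurrence which, since $q\neq 0$, can be run in both directions and hence is determined by a single value. First I would read off $\mu(k)=\alpha$ by setting $i=0$ in the $i\geq 0$ branch, where the sum $\sum_{j=0}^{-1}$ is empty, so that $\mu(k)=q^0\alpha=\alpha$. It then remains to check $\mu(i+k+1)=q\mu(i+k)+g(\lambda(i+k))$ for every $i\in\ZZ$. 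I would split this into the cases $i\geq 0$ and $i\leq -1$, these being exactly the ranges where $i$ and $i+1$ lie in the same branch of \eqref{E:irreps:Verma:mu}; the passage $i=-1\mapsto i=0$ across the two branches is absorbed into the $i\leq -1$ computation since $\mu(k)=\alpha$ is the common value of both branches at $i=0$.

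The verification in each case is a routine index shift. For $i\geq 0$, multiplying the stored value by $q$ and re-indexing the sum by $j\mapsto j+1$ gives $q\mu(i+k)=q^{i+1}\alpha+\sum_{j=1}^{i}q^{j}g(\lambda(k+i-j))$, and adding the inhomogeneous term $g(\lambda(i+k))$ supplies precisely the missing $j=0$ summand, so that $q\mu(i+k)+g(\lambda(i+k))=q^{i+1}\alpha+\sum_{j=0}^{i}q^{j}g(\lambda(k+i-j))=\mu(i+1+k)$. For $i\leq -1$ the same shift turns the subtracted sum over $j$ from $i$ to $-1$ into a subtracted sum over $j$ from $i+1$ to $0$, whose $j=0$ term equals $g(\lambda(i+k))$ and is therefore cancelled by the added $g(\lambda(i+k))$, again leaving $\mu(i+1+k)$. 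This establishes $\mu\in\Tf$ and completes part \ref{L:irreps:Verma:mu:a}.

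For part \ref{L:irreps:Verma:mu:b} I would argue by uniqueness. Let $\mu'$ denote the function defined by \eqref{E:irreps:Verma:mu}; by part \ref{L:irreps:Verma:mu:a} we have $\mu'\in\Tf$ and $\mu'(k)=\alpha$. Setting $\nu=\mu-\mu'$ and subtracting the two instances of the relation defining $\Tf$ eliminates the inhomogeneous term $g(\lambda(i))$, yielding the homogeneous recurrence $\nu(i+1)=q\nu(i)$ for all $i\in\ZZ$. Here the hypothesis $q\neq 0$ is exactly what lets me run this recurrence backwards as well as forwards, so that $\nu(k+m)=q^{m}\nu(k)$ for every $m\in\ZZ$. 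Since $\nu(k)=\mu(k)-\mu'(k)=\alpha-\alpha=0$, it follows that $\nu\equiv 0$, i.e.\ $\mu=\mu'$ is given by \eqref{E:irreps:Verma:mu}, as claimed.

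I do not expect any genuine obstacle here: the entire lemma expresses the unique solution of a first-order linear recurrence pinned down by its value at one index, and the invertibility of the backward step is guaranteed by $q\neq 0$. The only place demanding attention is the purely clerical bookkeeping of the index shift inside the piecewise sum of \eqref{E:irreps:Verma:mu}, together with confirming that the $i=0$ boundary between the two branches is handled consistently; the uniqueness half of the argument is then immediate.
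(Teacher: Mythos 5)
Your proof is correct, and while part \ref{L:irreps:Verma:mu:a} proceeds exactly as in the paper (a direct check of the recurrence by splitting into $i\geq 0$ and $i\leq -1$ and shifting the summation index, with the $i=0$ boundary handled by noting both branches give $\alpha$ there), your part \ref{L:irreps:Verma:mu:b} takes a genuinely different route. The paper proves \ref{L:irreps:Verma:mu:b} by induction on $|i|$, explicitly re-deriving the closed formula \eqref{E:irreps:Verma:mu} in both directions: for $i\geq 0$ it computes $\mu(i+1+k)=q\mu(i+k)+g(\lambda(i+k))$ and reindexes, and for $i\leq 0$ it solves backwards via $\mu(i-1+k)=q^{-1}\mu(i+k)-q^{-1}g(\lambda(i-1+k))$, which is where $q\neq 0$ enters. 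You instead observe that the defining condition of $\Tf$ is an inhomogeneous first-order linear recurrence, so the difference $\nu=\mu-\mu'$ of two solutions satisfies the homogeneous recurrence $\nu(i+1)=q\nu(i)$, runs it in both directions (again using $q\neq 0$ to invert the step), and concludes $\nu\equiv 0$ from $\nu(k)=0$. The two arguments carry the same computational content but distribute it differently: the paper leaves \ref{L:irreps:Verma:mu:a} as a terse verification and spends its effort on the induction in \ref{L:irreps:Verma:mu:b}, whereas you do the index bookkeeping once, in \ref{L:irreps:Verma:mu:a}, and then get \ref{L:irreps:Verma:mu:b} essentially for free. Your version is arguably cleaner conceptually, since it isolates the uniqueness principle (a first-order recurrence with invertible step is determined by one value) instead of re-deriving the explicit solution, and it makes transparent that $q\neq 0$ is used only to run the recurrence backwards; the paper's induction, on the other hand, is self-contained in \ref{L:irreps:Verma:mu:b} and does not logically depend on having first established \ref{L:irreps:Verma:mu:a}.
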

\begin{proof}
Part \ref{L:irreps:Verma:mu:a} can be verified directly by separately considering the cases $i\geq 0$ and $i\leq -1$. For \ref{L:irreps:Verma:mu:b}, suppose $\mu\in\Tf$ with $\mu(k)=\alpha$. We will prove that \eqref{E:irreps:Verma:mu} holds by induction on $|i|$. Since the case $i=0$ is tautological, we proceed by induction.

Let $i\geq 0$ and suppose that \eqref{E:irreps:Verma:mu} holds for $i$. We will prove that it also holds for $i+1$. We have
\begin{align*}
\mu(i+1+k)&=q\mu(i+k)+g(\lambda(i+k))\\
&=q^{i+1}\alpha+\sum\limits_{j=0}^{i-1}q^{j+1}g(\lambda(k+i-j-1))+g(\lambda(i+k))\\
&= q^{i+1}\alpha+\sum\limits_{j=1}^{i}q^{j}g(\lambda(k+i-j))+g(\lambda(i+k))\\
&=q^{i+1}\alpha+\sum\limits_{j=0}^{i}q^{j}g(\lambda(k+i-j)).
\end{align*}

Now assume that \eqref{E:irreps:Verma:mu} holds for $i$, with $i\leq0$. We will show that it still holds for $i-1$. Since $\mu(i+k)=q\mu(i+k-1)+g(\lambda(i+k-1))$, we have 
\begin{align*}
\mu(i-1+k)&=q^{-1}\mu(i+k)-q^{-1}g(\lambda(i-1+k))\\
&=q^{i-1}\alpha-\sum\limits_{j=i}^{-1}q^{j-1}g(\lambda(k+i-j-1))-q^{-1}g(\lambda(i-1+k))\\
&=q^{i-1}\alpha-\sum\limits_{j=i-1}^{-2}q^{j}g(\lambda(k+i-j-2))-q^{-1}g(\lambda(i-1+k))\\
&=q^{i-1}\alpha-\sum\limits_{j=i-1}^{-1}q^{j}g(\lambda(k+i-j-2)).
\end{align*}
\end{proof}

\begin{lemma}\label{L:irreps:Verma:periodicl}
Suppose $q\neq 0$. Let $\lambda\in\Sf$ and $\mu\in\Tf$.
\begin{enumerate}[label=(\alph*)]
\item For all $k,i\in\ZZ$, $\mu(i+k|\lambda|)-\mu(i)=q^i(\mu(k|\lambda|)-\mu(0))$.  \label{L:irreps:Verma:periodicl:a}
\item The set $\pb{k\in\ZZ\mid\mu(k|\lambda|)=\mu(0)}$ is an additive subgroup of $\ZZ$.\label{L:irreps:Verma:periodicl:b}
\item \label{L:irreps:Verma:periodicl:c} Let $m\in\ZZ$ be a generator of the subgroup in \ref{L:irreps:Verma:periodicl:b} above.Then, for any $j, k, k'\in\ZZ$,
\begin{equation*}
\mu(j+k|\lambda|)=\mu(j+k'|\lambda|) \quad \mbox{if and only if \quad $m\, |\, k-k'.$} 
\end{equation*}%
\end{enumerate}
\end{lemma}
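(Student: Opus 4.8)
The plan is to prove part \ref{L:irreps:Verma:periodicl:a} first and then obtain \ref{L:irreps:Verma:periodicl:b} and \ref{L:irreps:Verma:periodicl:c} as essentially formal consequences. For \ref{L:irreps:Verma:periodicl:a}, rather than manipulate the closed formula of Lemma~\ref{L:irreps:Verma:mu}, I would fix $k$ and study the difference function $D(i)=\mu(i+k|\lambda|)-\mu(i)$ as $i$ varies. Applying the defining relation $\mu(i+1)=q\mu(i)+g(\lambda(i))$ of $\Tf$ at the indices $i+k|\lambda|$ and $i$ and subtracting gives
\[
D(i+1)=qD(i)+\big(g(\lambda(i+k|\lambda|))-g(\lambda(i))\big).
\]
The crucial point is that $|\lambda|$ lies in the period subgroup of $\lambda$, so $\lambda(i+k|\lambda|)=\lambda(i)$ for all $i,k\in\ZZ$, whence the bracketed term vanishes. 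This leaves the clean recurrence $D(i+1)=qD(i)$, and since $q\neq 0$ this propagates in both directions to yield $D(i)=q^iD(0)$, which is exactly the claim since $D(0)=\mu(k|\lambda|)-\mu(0)$. The degenerate case $|\lambda|=0$ needs no separate treatment, as then $\lambda(i+k|\lambda|)=\lambda(i)$ holds trivially and both sides vanish.

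For part \ref{L:irreps:Verma:periodicl:b}, set $H=\pb{k\in\ZZ\mid \mu(k|\lambda|)=\mu(0)}$. It contains $0$, so it suffices to check closure under addition and negation, and here part \ref{L:irreps:Verma:periodicl:a} does all the work. Taking $i=k'|\lambda|$ in \ref{L:irreps:Verma:periodicl:a} gives $\mu((k+k')|\lambda|)-\mu(k'|\lambda|)=q^{k'|\lambda|}(\mu(k|\lambda|)-\mu(0))$; if $k,k'\in H$ the right-hand side is $0$, so $\mu((k+k')|\lambda|)=\mu(k'|\lambda|)=\mu(0)$ and $k+k'\in H$. Likewise, taking $i=-k|\lambda|$ yields $\mu(0)-\mu(-k|\lambda|)=q^{-k|\lambda|}(\mu(k|\lambda|)-\mu(0))$, which shows $-k\in H$ whenever $k\in H$.

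For part \ref{L:irreps:Verma:periodicl:c}, I would reduce the general equality first to the case $j=0$ and then to membership in $H$. Applying \ref{L:irreps:Verma:periodicl:a} with $i=j$ to both $k$ and $k'$ and subtracting gives $\mu(j+k|\lambda|)-\mu(j+k'|\lambda|)=q^j(\mu(k|\lambda|)-\mu(k'|\lambda|))$; since $q^j\neq 0$, the equality $\mu(j+k|\lambda|)=\mu(j+k'|\lambda|)$ is equivalent to $\mu(k|\lambda|)=\mu(k'|\lambda|)$. A second application of \ref{L:irreps:Verma:periodicl:a}, this time with base index $i=k'|\lambda|$ and increment $k-k'$, rewrites $\mu(k|\lambda|)-\mu(k'|\lambda|)=q^{k'|\lambda|}(\mu((k-k')|\lambda|)-\mu(0))$, so (again dividing by the nonzero power of $q$) this is equivalent to $k-k'\in H$, that is, to $m\mid k-k'$ by the choice of $m$. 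Chaining these equivalences gives the statement.

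I expect no serious obstacle once part \ref{L:irreps:Verma:periodicl:a} is phrased through the difference function $D$: the cancellation forced by the periodicity of $\lambda$ is immediate, and parts \ref{L:irreps:Verma:periodicl:b} and \ref{L:irreps:Verma:periodicl:c} are then purely formal. The only points demanding mild care are the repeated use of $q\neq 0$, both to run the recurrence backward and to divide out powers of $q$, and being consistent about which index plays the role of the base point $i$ in each invocation of \ref{L:irreps:Verma:periodicl:a}.
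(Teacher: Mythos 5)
Your proposal is correct and follows essentially the same route as the paper: your recurrence $D(i+1)=qD(i)$ for the difference function is precisely the paper's induction on $|i|$ in part (a) (with the cancellation $g(\lambda(i+k|\lambda|))=g(\lambda(i))$, which the paper uses silently, made explicit), and your derivations of (b) and (c) are the same formal substitutions into (a), differing only cosmetically (closure under addition and negation rather than subtraction, and a two-step reduction in (c) where the paper uses a single application). No gaps.
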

\begin{proof}
The proof of \ref{L:irreps:Verma:periodicl:a} is by induction on $|i|$. In case $i=0$ there is nothing to prove, so suppose $i\geq 0$ and \ref{L:irreps:Verma:periodicl:a} holds for $i$. Then
\begin{align*}
\mu(i+1+k|\lambda|)-\mu(i+1)&=q\mu(i+k|\lambda|)+g(\lambda(i+k|\lambda|))-q\mu(i)-g(\lambda(i))\\
&=q(\mu(i+k|\lambda|)-\mu(i))\\
&=q^{i+1}(\mu(k|\lambda|)-\mu(0)).
\end{align*}
Now suppose $i\leq 0$. We have
\begin{align*}
\mu(i-1+k|\lambda|)-\mu(i-1)&=q^{-1}\seq{\mu(i+k|\lambda|)-g(\lambda(i-1+k|\lambda|))-\mu(i)
+g(\lambda(i-1))}\\
&=q^{-1}(\mu(i+k|\lambda|)-\mu(i))\\
&=q^{i-1}(\mu(k|\lambda|)-\mu(0)).
\end{align*}

The set in \ref{L:irreps:Verma:periodicl:b} contains $0$. Suppose $a, b\in\pb{k\in\ZZ\mid\mu(k|\lambda|)=\mu(0)}$. Then, using \ref{L:irreps:Verma:periodicl:a}, we get
\begin{align*}
\mu(0)&=\mu(a|\lambda|)=\mu((a-b)|\lambda|+b|\lambda|)=
\mu((a-b)|\lambda|)+q^{(a-b)|\lambda|}(\mu(b|\lambda|)-\mu(0))\\
&=\mu((a-b)|\lambda|).
\end{align*}
So $a-b\in\pb{k\in\ZZ\mid\mu(k|\lambda|)=\mu(0)}$.

Finally, for \ref{L:irreps:Verma:periodicl:c}, we again use \ref{L:irreps:Verma:periodicl:a}:
\begin{align*}
\mu(j+k|\lambda|)=\mu(j+k'|\lambda|+(k-k')|\lambda|)= \mu(j+k'|\lambda|)+q^{j+k'|\lambda|}(\mu((k-k')|\lambda|)-\mu(0)).
\end{align*}
Thus, 
\begin{equation*}
\mu(j+k|\lambda|)=\mu(j+k'|\lambda|)\iff \mu((k-k')|\lambda|)=\mu(0)\iff  k-k'\in m\ZZ. 
\end{equation*}
\end{proof}

In view of Lemma~\ref{L:irreps:Verma:periodicl}\ref{L:irreps:Verma:periodicl:b}, given $\mu\in\Tf$, with $\lambda\in\Sf$ and $q\neq 0$, we define $|\mu|\geq 0$ so that
\begin{equation*}
|\mu|\,\ZZ= \pb{k\in\ZZ\mid\mu(k|\lambda|)=\mu(0)}.
\end{equation*}

Recall that $\sigma$ is the algebra endomorphism of $\FF[h]$ with $\sigma(h)=f$. Thus, $\sigma^k(t(h))=t(\sigma^k(h))=t\seq{f^{[k]}(h)}$, for all $k\geq 0$ and $t\in\FF[h]$.

\begin{lemma}\label{L:weightmods}
Let $V$ be an $\hqfg$-module and suppose that there are $v\in V$ and $\alpha,\beta \in \FF$ such that $h v=\alpha v$ and $(xy) v=\beta v$. Then, for all $k, \ell\geq 0$, we have
\begin{enumerate}[label=(\alph*)]
\item $h (x^kv)=f^{[k]}(\alpha)x^kv$; \label{L:weightmods:a}
\item $(xy) (x^kv)=\seq{q^k\beta+\sum\limits_{i=0}^{k-1}q^i g\seq{f^{[k-i-1]}(\alpha)}}x^kv$;\label{L:weightmods:b}
\item $(xy)(x^{k\ell}v)=\seq{q^{k\ell}\beta+\sum\limits_{i=0}^{\ell-1}\sum\limits_{j=0}^{k-1}q^{i+j\ell}g\seq{f^{[(k-j)\ell-1-i]}(\alpha)}}x^{k\ell}v$.\label{L:weightmods:c}
\end{enumerate}
\end{lemma}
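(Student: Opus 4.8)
The plan is to prove the three formulas by induction, establishing \ref{L:weightmods:a} and \ref{L:weightmods:b} first and then deriving \ref{L:weightmods:c} as a specialization with a change of summation index. The key tool throughout is the defining relation $hx=xf(h)$, which on the module level means that applying $x$ to an $h$-eigenvector of eigenvalue $\alpha$ produces an $h$-eigenvector of eigenvalue $f(\alpha)$; iterating gives part \ref{L:weightmods:a} immediately. More precisely, I would argue by induction on $k$: the base case $k=0$ is trivial, and for the inductive step I compute $h(x^{k+1}v)=hx(x^kv)=xf(h)(x^kv)$, and since by the induction hypothesis $x^kv$ is an $h$-eigenvector of eigenvalue $f^{[k]}(\alpha)$, this equals $xf\seq{f^{[k]}(\alpha)}(x^kv)=f^{[k+1]}(\alpha)x^{k+1}v$.

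For part \ref{L:weightmods:b} I would again induct on $k$, now using both \ref{L:weightmods:a} and the commutation relation $yx=qxy+g(h)$. The base case $k=0$ reads $(xy)v=\beta v$, which is the hypothesis. For the inductive step I write $(xy)(x^{k+1}v)=x(yx)(x^kv)=x\seq{qxy+g(h)}(x^kv)$. The term $xg(h)(x^kv)$ is handled by part \ref{L:weightmods:a}, since $g(h)$ acts on the $h$-eigenvector $x^kv$ by the scalar $g\seq{f^{[k]}(\alpha)}$, contributing $g\seq{f^{[k]}(\alpha)}x^{k+1}v$; the term $qx(xy)(x^kv)=q(xy)(x^{k+1}v)$—careful, I should instead write it as $q\cdot x\cdot(xy)(x^kv)$, applying the induction hypothesis for $(xy)(x^kv)$ and then noting $x$ commutes past the resulting scalar. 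Collecting the two contributions and shifting the summation index in the geometric-type sum yields the claimed formula for $k+1$. The bookkeeping of the index shift in $\sum_{i=0}^{k-1}q^i g\seq{f^{[k-i-1]}(\alpha)}$ is the only place requiring genuine care, but it is entirely mechanical.

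Finally, part \ref{L:weightmods:c} is not a separate induction but a direct consequence of \ref{L:weightmods:b}: I simply substitute $k\ell$ for $k$ in \ref{L:weightmods:b}, obtaining a single sum $\sum_{n=0}^{k\ell-1}q^n g\seq{f^{[k\ell-1-n]}(\alpha)}$, and then reindex the single variable $n$ in the range $0\le n\le k\ell-1$ by the pair $(i,j)$ via $n=i+j\ell$ with $0\le i\le \ell-1$ and $0\le j\le k-1$. Under this Euclidean-division bijection one checks that $k\ell-1-n=(k-j)\ell-1-i$ and $q^n=q^{i+j\ell}$, which transforms the single sum into the double sum appearing in \ref{L:weightmods:c}. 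The main obstacle—if one can call it that—is purely notational: verifying that the reindexing $n\mapsto(i,j)$ is a genuine bijection between $\{0,\dots,k\ell-1\}$ and the product range, and that the exponents of $f$ and $q$ match under it; there is no conceptual difficulty, since everything reduces to the two relations in \eqref{E:intro:def:qgha} together with the eigenvector propagation established in part \ref{L:weightmods:a}.
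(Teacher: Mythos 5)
Your proposal is correct and follows essentially the same route as the paper: part \ref{L:weightmods:a} from iterating $hx=xf(h)$, part \ref{L:weightmods:b} from the skew-commutation $yx=qxy+g(h)$, and part \ref{L:weightmods:c} from the Euclidean-division reindexing $n=i+j\ell$, exactly as in the paper's proof. The only cosmetic difference is that the paper invokes the algebra identity $yx^k=q^kx^ky+x^{k-1}\theta_k$ from Lemma~\ref{L:basic:conformal}\ref{L:basic:conformal:a} for \ref{L:weightmods:b}, whereas you inline the same induction directly at the level of eigenvectors.
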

\begin{proof}
Part \ref{L:weightmods:a} is implied by the relation $hx^k=x^k f^{[k]}(h)$ and \ref{L:weightmods:b} follows similarly from Lemma~\ref{L:basic:conformal}\ref{L:basic:conformal:a}. Finally, part \ref{L:weightmods:c} is a direct consequence of \ref{L:weightmods:b} and the fact that any $0\leq n\leq k\ell-1$ can be written uniquely in the form $n=i+j\ell$ for $0\leq i\leq \ell-1$ and $0\leq j\leq k-1$.
\end{proof}

Drawing motivation from Lie theory, an element $v$ as in Lemma~\ref{L:weightmods} should be thought of as a \textit{weight vector}, relative to the commuting elements $h$ and $xy$. An $\hqfg$-module having a basis consisting of weight vectors should be thought of as a \textit{weight module}. We will construct universal weight modules next.

Given $\lambda\in\Sf$ and $\mu\in\Tf$ we define the $\hqfg$-module $\Af$ by setting $\Af=\FF[t^{\pm 1}]$, as vector spaces, with action given by
\begin{equation}\label{E:reps:def:Af}
ht^i=\lambda(i)t^i, \quad xt^i=t^{i+1}, \quad yt^i=\mu(i)t^{i-1}, \quad \mbox{for all $i\in\ZZ$.}
\end{equation}
Let us see that \eqref{E:reps:def:Af} does indeed define an $\hqfg$-module. For $i\in\ZZ$,
\begin{align*}
(hx-xf(h))t^i=ht^{i+1}-xf(\lambda(i))t^i=(\lambda(i+1)-f(\lambda(i)))t^{i+1}=0,
\end{align*}
by the definition of $\Sf$. Similarly, $(yh-f(h)y)t^i=0$. Finally,	
\begin{align*}
(yx-qxy-g(h))t^i=yt^{i+1}-q\mu(i)xt^{i-1}-g(\lambda(i))t^i=(\mu(i+1)-q\mu(i)-g(\lambda(i)))t^i=0,
\end{align*}
by the definition of $\Tf$.

Recall that $\hqfg$ has an order 2 anti-automorphism $\iota$ which fixes $h$ and interchanges $x$ and $y$. We can use $\iota$ to define a module structure on the dual of an $\hqfg$-module. Concretely, let $V$ be an $\hqfg$-module. Then we can define on the dual space $V^*$ the action
\begin{equation*}
(a\cdot f)(v)=f(\iota(a)\cdot v), \quad\mbox{for all $a\in\hqfg$, $f\in V^*$ and $v\in V$.}
\end{equation*}

Take $V=\Af$ and let $\{f_i\}_{i\in\mathbb Z}$ be the dual basis relative to $\{t^i\}_{i\in\mathbb Z}$. Then on $V^*$ we have, for all $i, j\in\ZZ$, 
\begin{equation*}
(y\cdot f_i)(t^j)=f_i(\iota(y)\cdot t^j)=f_i(x\cdot t^j)=f_i(t^{j+1})=\delta_{i,j+1}=\delta_{i-1,j}=f_{i-1}(t^j). 
\end{equation*}
Thus, $y\cdot f_i=f_{i-1}$. Similarly, $x\cdot f_i=\mu(i+1)f_{i+1}$ and $h\cdot f_i=\lambda(i)f_i$.

Let $\Bf$ be the submodule of $\seq{\Af}^*$ spanned by $\{f_i\}_{i\in\mathbb Z}$. The module $\Bf$ is the so-called \textit{finite dual} of $\Af$. For convenience of notation, we can identify $\Bf$ with $\FF[t^{\pm1}]$ (so that $f_i$ corresponds to $t^i$). Then the action of $\hqfg$ on $\Bf$ is given as follows: 
\begin{equation}\label{E:reps:def:Bf}
ht^i=\lambda(i)t^i, \quad xt^i=\mu(i+1)t^{i+1}, \quad yt^i=t^{i-1}, \quad \mbox{for all $i\in\ZZ$.}
\end{equation}

\subsection{Finite-dimensional simple $\hqfg$-modules}\label{SS:irreps:fdiml}

We start out by constructing finite-dimensional simple $\hqfg$-modules as quotients of the modules $\Af$ and $\Bf$.

\begin{lemma}\label{L:irreps:fdiml:quotAB}
Assume that $q\neq 0$. Let $\lambda\in \Sf$, $\mu\in \Tf$ such that $|\lambda|, |\mu|\geq 1$. For any $\gamma\in\FF^*$,
the subspace $\FF[t^{\pm1}](t^{|\lambda| |\mu|}-\gamma)$ is a submodule of both $\Af$ and $\Bf$ and the corresponding factor modules
\begin{equation*}
\Af/\FF[t^{\pm1}](t^{|\lambda| |\mu|}-\gamma) \quad\mbox{and}\quad \Bf/\FF[t^{\pm1}](t^{|\lambda| |\mu|}-\gamma) 
\end{equation*}
are simple. 
\end{lemma}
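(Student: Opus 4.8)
The plan is to handle $\Af$ and $\Bf$ in parallel, writing $N=|\lambda|\,|\mu|$ throughout and working inside the quotient ring $\FF[t^{\pm1}]/(t^N-\gamma)$. Since $\gamma\in\FF^*$, the Laurent polynomial $t^N-\gamma$ is a non-unit while $t$ becomes invertible modulo it (with inverse $\gamma^{-1}t^{N-1}$), so one checks that $\FF[t^{\pm1}]/(t^N-\gamma)\cong\FF[t]/(t^N-\gamma)$ is $N$-dimensional with basis the images $\bar t^0,\dots,\bar t^{N-1}$. This records the shape of the eventual simple modules before anything else.

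First I would verify that $M:=\FF[t^{\pm1}](t^N-\gamma)$ is a submodule of $\Af$. As $M$ is spanned by the elements $t^j(t^N-\gamma)=t^{j+N}-\gamma t^j$ and $h,x,y$ generate $\hqfg$, it suffices to check these three generators send each such element back into $M$. For $h$ this uses that $\lambda$ has period $|\lambda|$, so $\lambda(j+N)=\lambda(j)$ and $h\cdot(t^{j+N}-\gamma t^j)=\lambda(j)(t^{j+N}-\gamma t^j)\in M$; for $x$ it is immediate from $xt^i=t^{i+1}$. The only nontrivial case is $y$, where $y\cdot(t^{j+N}-\gamma t^j)=\mu(j+N)t^{j+N-1}-\gamma\mu(j)t^{j-1}$ lies in $M$ precisely because $\mu(j+N)=\mu(j)$. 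This last identity is exactly Lemma~\ref{L:irreps:Verma:periodicl}\ref{L:irreps:Verma:periodicl:c} applied with base point $j$, $k=|\mu|$ and $k'=0$, since $m=|\mu|$ divides $|\mu|$. The verification for $\Bf$ is symmetric under the exchange of $x$ and $y$, the sole nontrivial check now being that $x$ preserves $M$, again via $\mu(j+N)=\mu(j)$.

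The heart of the argument is the simplicity of $V:=\Af/M$ (and likewise of $\Bf/M$). I would first record that $h$ and $xy$ commute in $\hqfg$ (one has $hxy=xf(h)y=xyh$) and act diagonally on $V$: a direct computation gives $h\,\bar t^i=\lambda(i)\bar t^i$ and $xy\,\bar t^i=\mu(i)\bar t^i$, so $\bar t^i$ is a joint eigenvector with joint eigenvalue $(\lambda(i),\mu(i))$. The key claim is that these $N$ joint eigenvalues are pairwise distinct for $0\le i\le N-1$. Indeed, if $(\lambda(i),\mu(i))=(\lambda(j),\mu(j))$, then $\lambda(i)=\lambda(j)$ forces $|\lambda|\mid i-j$ by \cite[Lem.\ 6]{LZ15}, so $i-j=c|\lambda|$ with $|c|<|\mu|$; then $\mu(i)=\mu(j)$ together with Lemma~\ref{L:irreps:Verma:periodicl}\ref{L:irreps:Verma:periodicl:c} yields $|\mu|\mid c$, whence $c=0$ and $i=j$. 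Consequently the joint $(h,xy)$-eigenspaces of $V$ are exactly the lines $\FF\bar t^i$.

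Finally I would deduce simplicity. Let $W\neq0$ be a submodule. As $W$ is invariant under the commuting diagonalizable operators $h$ and $xy$, it decomposes as the direct sum of its intersections with their joint eigenspaces, and since those eigenspaces are the lines $\FF\bar t^i$, the space $W$ is spanned by a nonempty subset of $\{\bar t^0,\dots,\bar t^{N-1}\}$; in particular $W$ contains some $\bar t^i$. Since $x$ acts on $V$ as an invertible cyclic shift, $x\,\bar t^i=\bar t^{i+1}$ for $i<N-1$ and $x\,\bar t^{N-1}=\gamma\bar t^0$ with $\gamma\neq0$, repeated application of $x$ produces every $\bar t^j$ up to a nonzero scalar, forcing $W=V$. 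For $\Bf/M$ the identical argument applies, using instead that $y$ acts there as an invertible cyclic shift. I expect the main obstacle to be the distinctness of the joint eigenvalues, as this is the one step where the period $|\lambda|$ of $\lambda$ and the quantity $|\mu|$ governing $\mu$ must be combined, through Lemma~\ref{L:irreps:Verma:periodicl}; the submodule verification and the concluding cyclic-shift argument are then routine.
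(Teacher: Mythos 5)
Your proof is correct and follows essentially the same route as the paper: the submodule check rests on the periodicities $\lambda(j+|\lambda||\mu|)=\lambda(j)$ and $\mu(j+|\lambda||\mu|)=\mu(j)$ via Lemma~\ref{L:irreps:Verma:periodicl}, and simplicity is deduced from simultaneous diagonalizability of the commuting operators $h$ and $xy$, the distinctness of eigenvalues guaranteed by Lemma~\ref{L:irreps:Verma:periodicl}\ref{L:irreps:Verma:periodicl:c}, and the invertible cyclic-shift action of $x$ (respectively $y$ on the dual side). The only cosmetic difference is that you isolate the lines $\FF\bar t^i$ as joint $(h,xy)$-eigenspaces in one step, whereas the paper first locates an $h$-eigenvector in a submodule and then separates the $xy$-eigenvalues within that $h$-eigenspace.
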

\begin{proof}
We carry out the proof for $\Af$; the case of the dual module $\Bf$ is symmetric. 

Let $M=\FF[t^{\pm1}](t^{|\lambda| |\mu|}-\gamma t^0)$. Then $M$ has basis $\pb{P_j\mid j\in\ZZ},$ where $P_j=t^{j+|\lambda| |\mu|}-\gamma t^j$. We have 
\begin{align*}
x P_j&=P_{j+1},\\ 
h P_j&=\lambda(j+|\lambda| |\mu|)t^{j+|\lambda| |\mu|}-\gamma\lambda(j)t^j=\lambda(j)P_j \qquad \mbox{(by the definition of $|\lambda|$),}\\ 
y P_j&=\mu(j+|\lambda| |\mu|)t^{j-1+|\lambda| |\mu|}-\gamma\mu(j)t^{j-1}=\mu(j)P_{j-1} \qquad \mbox{(by Lemma~\ref{L:irreps:Verma:periodicl}).}
\end{align*}
So $M$ is indeed a submodule of $A_{\mathcal H_q(f,g)}(\lambda,\mu)$. Denote the quotient module by 
\begin{equation*}
\overline{\A}=\Af/M 
\end{equation*}
and set $T_i=t^i+M\in\overline{\A}$, for all $i\in\ZZ$. Then $\{T_0,\ldots,T_{|\lambda| |\mu|-1}\}$ is a basis of $\overline{\A}$ and it can be readily verified by induction on $|n|$ that $T_{i+n|\lambda| |\mu|}=\gamma^n T_i$, for all $i, n\in\ZZ$. The action on $\overline{\A}$ is given by 
\begin{equation*}
xT_j=T_{j+1}, \quad  hT_j=\lambda(j)T_{j}, \quad  yT_j=\mu(j)T_{j-1}, 
\end{equation*}
with $xT_{|\lambda| |\mu|-1}=\gamma T_0$ and $yT_0=\mu(0)\gamma^{-1}T_{|\lambda| |\mu|-1}$. 

Let $W\subseteq\overline{\A}$ be a nonzero submodule. Since the action of $h$ is diagonalizable on $\overline{\A}$, it must be so also on $W$, so $W$ contains some eigenvector for $h$, say $0\neq w\in W$. The eigenvalues of $h$ on $\overline{\A}$ are precisely $\lambda(0),\lambda(1),\ldots,\lambda(|\lambda|-1)$, so we must have $h w=\lambda(j)w$, for some $0\leq j<|\lambda|$. Since the $\lambda(j)$-eigenspace for $h$ on $\overline{\A}$ is $W_j=\spann_\FF\pb{T_{j+i|\lambda|}\mid 0\leq i<|\mu|}$, we have $w\in W_j$. Moreover, as $(xy) T_k=\mu(k)T_k$, for all $k$, it follows that $xy$ is also diagonalizable on $\overline{\A}$, and in particular on the nonzero subspace $W\cap W_j$. The eigenvalues for $xy$ on $W_j$ are $\mu(j), \mu(j+|\lambda|), \ldots, \mu(j+(|\mu|-1)|\lambda|)$, which are all distinct, by Lemma~\ref{L:irreps:Verma:periodicl}. Thus, the corresponding eigenspaces are of the form $\FF T_{j+i|\lambda|}$. In particular, there is $k$ such that $T_k\in W$. Then, using $xT_k=T_{k+1}$ and $xT_{|\lambda| |\mu|-1}=\gamma T_0$, with $\gamma\in\FF^*$, we conclude that $W=\overline{\A}$, which proves the simplicity of $\overline{\A}$.
\end{proof}

Next, we characterize the modules just obtained above.


\begin{prop}\label{P:irreps:fdiml:xbijective}
Assume $\FF=\overline\FF$ and $q\neq 0$.
Let $V$ be a simple $\hqfg$-module with $\dim_\FF V=n$ and such that $x^nV\neq 0$. Then there are $\gamma\in\FF^*$, $\lambda\in \Sf$ and $\mu\in \Tf$ with $|\lambda|, |\mu|\geq 1$, $n=|\lambda||\mu|$ and
\begin{equation*}
V\simeq \Af/\FF[t^{\pm1}](t^{|\lambda||\mu|}-\gamma).
\end{equation*}
\end{prop}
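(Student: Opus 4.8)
The plan is to deduce from the single hypothesis $x^nV\neq 0$ that $x$ acts bijectively on $V$, then to realize $V$ as the image of a surjection from some $\Af$ by building a two-sided eigenvector chain inside $V$, and finally to identify the kernel via Schur's lemma.

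First I would prove that $X:=x|_V$ is bijective, which is the heart of the matter. Since $\dim_\FF V=n$, the condition $x^nV\neq 0$ says precisely that $X$ is not nilpotent, so in its Fitting decomposition $V=\ker X^n\oplus \im X^n$ the summand $V_1:=\im X^n$ is nonzero and $X$ is invertible on it. The point is to show $V_1$ is a submodule. Stability under $x$ is immediate, and stability under $h$ follows from $hx^n=x^nf^{[n]}(h)$. The delicate step—the main obstacle of the whole argument—is stability under $y$: Lemma~\ref{L:basic:conformal}\ref{L:basic:conformal:a} gives $yx^n=q^nx^ny+x^{n-1}\theta_n(h)$, so a priori only $yV_1\subseteq \im X^n+\im X^{n-1}$. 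I would rescue this by noting that, $X$ being non-nilpotent, its generalized $0$-eigenspace has dimension $\le n-1$, so $X^{n-1}$ already annihilates it and $\im X^{n-1}=\im X^n=V_1$. Hence $yV_1\subseteq V_1$, $V_1$ is a nonzero submodule, and simplicity forces $V_1=V$, i.e.\ $x$ is bijective.

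Next I would use that $h$ and $xy$ commute (immediate from \eqref{E:intro:def:qgha}) to choose a common eigenvector $v_0$, $hv_0=\alpha v_0$, $(xy)v_0=\beta v_0$, and set $v_k=x^kv_0$. By Lemma~\ref{L:weightmods} the $v_k$ with $k\ge0$ are common eigenvectors; since $x$ is injective and $U:=\spann_\FF\{v_k:k\ge0\}$ is finite dimensional and $x$-stable, $x$ restricts to a bijection of $U$, whence $x^{-1}v_0\in U$ and (using $yv_0=x^{-1}(xy)v_0=\beta x^{-1}v_0$ with Lemma~\ref{L:basic:conformal}\ref{L:basic:conformal:a}) $U$ is a submodule. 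Simplicity gives $V=U$, so $V$ is spanned by common eigenvectors and both $h$ and $xy$ are diagonalizable. Letting $S$ be the finite set of $h$-eigenvalues, the relation $hx=xf(h)$ sends the $\nu$-eigenspace into the $f(\nu)$-eigenspace, and bijectivity of $x$ forces $f$ to restrict to a bijection of $S$; a further submodule argument (any union of $f$-orbits of eigenspaces is a submodule) shows $S$ is a single $f$-cycle of length $p\ge1$. This is exactly what lets me extend the chain: since inside $S$ each eigenvalue has a unique $f$-preimage, every $v_k$ ($k\in\ZZ$) is an honest $h$-eigenvector, with eigenvalue $\lambda(k)=f^{[k]}(\alpha)$ for $k\ge0$ and the forced backward continuation, so $\lambda\in\Sf$ and $|\lambda|=p$.

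With all $v_k$ honest $h$-eigenvectors, a two-way induction on $k$ (forward via $(xy)x=qx(xy)+xg(h)$, backward via $yx=qxy+g(h)$ and $q\neq0$) establishes $(xy)v_k=\mu(k)v_k$ and $yv_k=\mu(k)v_{k-1}$ for all $k$, where $\mu$ is given by $\mu(0)=\beta$ and the recursion defining $\Tf$, so $\mu\in\Tf$ by Lemma~\ref{L:irreps:Verma:mu}. Comparing with \eqref{E:reps:def:Af} shows $\phi:\Af\to V$, $t^k\mapsto v_k$, is a surjective homomorphism. Finiteness of $\dim V$ forbids $|\mu|=0$ (otherwise the $v_{k|\lambda|}$ would have pairwise distinct $xy$-eigenvalues by Lemma~\ref{L:irreps:Verma:periodicl}, yielding infinitely many independent vectors), so $|\lambda|,|\mu|\ge1$; put $N=|\lambda||\mu|$. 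Finally, $\lambda$ and $\mu$ are $N$-periodic while the pairs $(\lambda(k),\mu(k))$ for $0\le k<N$ are distinct (Lemma~\ref{L:irreps:Verma:periodicl}), so $v_0,\dots,v_{N-1}$ are independent and $x^N$ commutes with $h$, $x$ and $y$ on $V$ (for $h$ because $f^{[N]}(h)=h$ on $V$, for $y$ by periodicity of $\mu$). Thus $x^N\in\operatorname{End}_{\hqfg}(V)=\FF$ by Schur's lemma, so $x^N=\gamma\,1_V$ with $\gamma\in\FF^*$; then $v_{k+N}=\gamma v_k$, giving $\dim V=N=|\lambda||\mu|$ and $\ker\phi\supseteq\FF[t^{\pm1}](t^N-\gamma)$, and a dimension count shows $\phi$ induces the desired isomorphism $V\simeq\Af/\FF[t^{\pm1}](t^{|\lambda||\mu|}-\gamma)$.
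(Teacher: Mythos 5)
Your proposal is correct, and it realizes the paper's broad strategy---bijectivity of $x$, a chain $v_k=x^kv_0$ of joint eigenvectors for the commuting pair $h$, $xy$, and a surjection from $\Af$---through genuinely different mechanisms at three points. (i) For the $y$-stability of $W=\im\, x^n$, the paper writes $yW=y(xW)=yx^{n+1}V\subseteq x^nV$, using Lemma~\ref{L:basic:conformal}\ref{L:basic:conformal:a} with exponent $n+1$ so that the lower-order term still carries a factor $x^n$; you attack $yx^n$ directly and repair the stray $\im\, x^{n-1}$ term by the valid observation that non-nilpotency bounds the generalized $0$-eigenspace of $x$ by $n-1$, whence $\im\, x^{n-1}=\im\, x^n$. (ii) Where the paper does minimality bookkeeping---a minimal $\ell$ with $f^{[i]}(\alpha)=f^{[i+\ell]}(\alpha)$ after replacing $v_0$ by $x^iv_0$, then a minimal $m$ with $\mu_0=\mu_{m\ell}$ via the explicit geometric-series computation \eqref{E:irreps:fdiml:xbijective:defmusub}---you first diagonalize $h$ and $xy$ on all of $V$, note that $xV=V$ forces $f$ to permute the $h$-spectrum and that simplicity forces a single $f$-orbit, and then extract $|\mu|\geq 1$ from Lemma~\ref{L:irreps:Verma:periodicl} together with finite-dimensionality; this offloads the arithmetic onto the periodicity lemma and yields the doubly-infinite eigenvector chain (including honest $h$-eigenvalues for $v_k$ with $k<0$, pinned down by injectivity of $f$ on the spectrum) with no shifting of the base vector. (iii) The endgame diverges most: the paper manufactures $\gamma$ as an eigenvalue of $x^{m\ell}$ on the joint $(\alpha,\beta)$-eigenspace and gets injectivity of $\overline\phi$ from the maximality of $\FF[t^{\pm1}](t^{|\lambda||\mu|}-\gamma)$ supplied by Lemma~\ref{L:irreps:fdiml:quotAB}, never invoking Schur; you instead verify that $x^N$, $N=|\lambda||\mu|$, is an $\hqfg$-endomorphism---which does require the relations $yv_k=\mu(k)v_{k-1}$ for all $k\in\ZZ$ and the $N$-periodicity of $\mu$, both of which you establish---and apply Schur's lemma plus a dimension count. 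Since $\FF=\overline\FF$ is a standing hypothesis, your appeal to Schur is legitimate (both routes already use algebraic closure to produce eigenvectors), so the trade is essentially the paper's explicit computations in its Steps 3, 5 and 7 against your structural orbit and endomorphism arguments; your version is conceptually cleaner, the paper's is more self-contained at each step and makes the eigenbasis $\pb{\omega_i}$ and the scalar $\gamma$ completely explicit.
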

\begin{proof}
We will break up the proof of this fundamental result into several steps. 
\begin{enumerate}[start=1,label={\it Step~\arabic*.},ref={\it Step~\arabic*},wide =0\parindent, leftmargin = \parindent]
\item $x$ acts bijectively on $V$.\\[5pt]
\noindent
We have the chain $x^nV\subseteq x^{n-1}V\subseteq \cdots \subseteq xV\subseteq V.$ If $x^iV\varsubsetneq x^{i-1}V$ for all $1\le i\le n$, then $\dim_\FF x^iV\leq n-i$, so $\dim_\FF x^nV=0$, which is a contradiction. So there is $1\le i\le n$ such that $x^iV=x^{i-1}V$ and thus $x^jV=x^{i-1}V$ for every $j\geq i-1$. Let $W=\cap_{j\ge0}x^jV=x^{i-1}V=x^nV\neq0$. Then $xW=x^{n+1}V=x^nV=W$, showing that $W$ is stable under the action of $x$ and that $x:W\to W$ is surjective. Since $\dim_\FF W<\infty$, this map is bijective, which means that $x$ acts bijectively on $W$. On the other hand, we have $hW=hx^nV=x^n\sigma^n(h)V\subseteq x^nV=W$, so $W$ is stable under the action of $h$. Finally, using Lemma \ref{L:basic:conformal} we have 
\begin{equation*}
yW=y(xW)=yx^{n+1}V=x^n\zeta V\subseteq x^nV=W, 
\end{equation*}
for some $\zeta\in\hqfg$. The preceeding shows that $W$ is a nonzero submodule of $V$. By the simplicity of $V$, we must have $W=V$ and thus $x$ acts bijectively on $V$.\label{P:irreps:fdiml:xbijective:st1}
\item There are $\ell\geq 1$ and $v_0\in V\setminus\{0\}$ such that $v_0$ is a common eigenvector for $h$ and for $xy$, with $h$-eigenvalue equal to $\alpha\in\FF$, where $f^{[\ell]}(\alpha)=\alpha$ and $\ell$ is minimal among positive integers with this property.\\[5pt]
\noindent
Since $h$ and $xy$ commute and $\FF=\overline\FF$, they have a common eigenvector on the finite-dimensional space $V$, say $v\in V\setminus\{0\}$. Moreover, by \ref{P:irreps:fdiml:xbijective:st1} and Lemma~\ref{L:weightmods}, $x^k v$ is still a common eigenvector for $h$ and $xy$, for all $k\geq 0$. Then the $h$-eigenvalue of $x^k v$ is $f^{[k]}(\alpha)$, where $\alpha$ is the $h$-eigenvalue of $v$. Since the number of eigenvalues must be finite and $x^k v\neq 0$ for all $k$, there are $i\geq 0$ and $\ell\geq1$ such that $f^{[i]}(\alpha)=f^{[i+\ell]}(\alpha)$, and we can assume that $\ell$ is minimal with this property. Replacing $v$ with $v_0=x^i v$, we can further assume without loss of generality that $i=0$. So, $h v_0=\alpha v_0$, $(xy) v_0\in\FF v_0$ and $f^{[\ell]}(\alpha)=\alpha$, with $\ell\geq1$ minimal.
\setcounter{tempcounter}{\value{enumi}}\addtocounter{tempcounter}{1}
\end{enumerate}

Henceforth, fix $\ell\geq 1$, $\alpha\in\FF$ and $v_0\in V\setminus\{0\}$ as given above.

\begin{enumerate}[start=\value{tempcounter},label={\it Step~\arabic*.},ref={\it Step~\arabic*},wide =0\parindent, leftmargin = \parindent]
\item Define $\lambda:\ZZ\longrightarrow\FF$ by $\lambda(i)=f^{[j]}(\alpha)$, where $0\leq j<\ell$ and $i\equiv j\ \modd{\ell}$. Then $\lambda\in\Sf$, $|\lambda|=\ell$ and $hx^iv_0=\lambda(i)x^iv_0$, for all $i\geq 0$.\\[5pt]
\noindent
We need to show that $f(\lambda(i))=\lambda(i+1)$ for all $i\in\ZZ$. Write $i=k\ell+j$ with $j$ as above and $k\in\ZZ$. If $j<\ell-1$, then $\lambda(i+1)=f^{[j+1]}(\alpha)=f(f^{[j]}(\alpha))=f(\lambda(i))$.
On the other hand, if $j=\ell-1$, then $i+1=(k+1)\ell$, so $\lambda(i+1)=f^{[0]}(\alpha)=\alpha$ and $f(\lambda(i))=f(f^{[\ell-1]}(\alpha))=f^{[\ell]}(\alpha)=\alpha$. Hence $\lambda\in \Sf$ and, by the definition of $\lambda$, $|\lambda|=\ell$. Finally, if $i\geq 0$ then $k\geq 0$ and $hx^iv_0=f^{[i]}(\alpha)x^i v_0=f^{[j]}(f^{[k\ell]}(\alpha))x^i v_0=f^{[j]}(\alpha)x^i v_0=\lambda(i)x^i v_0$. \label{P:irreps:fdiml:xbijective:st3}
\item $V=\spann_\FF\pb{x^kv_0\mid k\geq0}$.\\[5pt]
\noindent
Let $V'=\spann_\FF\pb{x^kv_0\mid k\geq0}$. As $xV'\subseteq V'$ and $x$ acts injectively on this finite-dimensional space, we have $xV'=V'$. Moreover, for any $k\geq0$, $x^kv_0$ is a common eigenvector for $h$ and $xy$, thus $h V'\subseteq V'$ and $y V'=yx V'=q(xy)V'+g(h)V'\subseteq V'$. So $V'$ is a nonzero submodule of $V$, hence $V=V'$, by the simplicity of $V$. 
\item There are $m\geq 1$ and $\omega_0\in V\setminus\{0\}$ such that $h\omega_0=\alpha\omega_0$, $(xy)\omega_0=\beta\omega_0$ and $x^{m\ell}\omega_0=\gamma\omega_0$, for some $\beta\in\FF$ and $\gamma\in\FF^*$. The integer, $m\geq 1$ is minimal such that $\mu_0=\mu_{m\ell}$, where $\mu_{k\ell}$ is defined by \eqref{E:irreps:fdiml:xbijective:defmusub} below.\\[5pt]
\noindent
Let $X=\spann_\FF\pb{x^{k\ell}v_0\mid k\geq 0}\subseteq V$. Notice that $x^{k\ell}v_0\neq0$ and $hx^{k\ell}v_0=\lambda(k\ell)x^{k\ell}v_0=\alpha x^{k\ell}v_0$, for all $k\geq 0$. Let $\beta$ be the $xy$-eigenvalue of $v_0$. Then, by Lemma~\ref{L:weightmods}, $(xy)x^{k\ell}v_0=\mu_{k\ell}x^{k\ell}v_0$, where
\begin{align}\label{E:irreps:fdiml:xbijective:defmusub}
\begin{split}
\mu_{k\ell}&=q^{k\ell}\beta+\sum\limits_{i=0}^{\ell-1}\sum\limits_{j=0}^{k-1}q^{i+j\ell}g\seq{f^{[(k-j)\ell-1-i]}(\alpha)}\\
&=q^{k\ell}\beta+\sum\limits_{i=0}^{\ell-1}q^{i}g\seq{f^{[\ell-1-i]}(\alpha)}\sum\limits_{j=0}^{k-1}q^{j\ell}\\
&=q^{k\ell}\beta+\Xi [k]_{q^\ell},
\end{split}
\end{align}
using the notation $\Xi=\sum\limits_{i=0}^{\ell-1}q^{i}g\seq{f^{[\ell-1-i]}(\alpha)}$ and $[k]_{q^\ell}=\sum\limits_{j=0}^{k-1}q^{j\ell}$.

Since $X$ is finite dimensional, there are $k\geq 0$ and $m\geq 1$ such that $\mu_{k\ell}=\mu_{(k+m)\ell}$ and we can assume that $m\geq 1$ is minimal with this property. As before, without loss of generality, we can also assume that $k=0$, so $\beta=\mu_0=\mu_{m\ell}$. Then, for $k\geq 0$, we have
\begin{align*}
\mu_{(k+1)m\ell}&= q^{(k+1)m\ell}\beta+\Xi [(k+1)m]_{q^\ell}\\
&=q^{km\ell+m\ell}\beta+\Xi \seq{[km]_{q^\ell}+q^{km\ell}[m]_{q^\ell}}\\
&=q^{km\ell}\mu_{m\ell}+\Xi [km]_{q^\ell}\\
&=q^{km\ell}\beta+\Xi [km]_{q^\ell}\\
&=\mu_{km\ell}
\end{align*}
and by induction we conclude that $\mu_{km\ell}=\beta$, for all $k\geq 0$.

Let $X'=\spann_\FF\pb{x^{km\ell}v_0\mid k\geq 0 }$. For any $\omega \in X'$ we have $h\omega=\alpha\omega$ and $(xy)\omega=\beta\omega$. Moreover, $x^{m\ell} X'\subseteq X'$, so $x^{m\ell}$ has some eigenvector $\omega_0$ on $X'$, say with eigenvalue $\gamma$. As $x$ acts bijectively on $V$, then $\gamma\in\FF^*$. So $\omega_0\in V\setminus\pb{0}$, $h\omega_0=\alpha\omega_0$, $(xy)\omega_0=\beta\omega_0$ and $x^{m\ell}\omega_0=\gamma\omega_0$.\label{P:irreps:fdiml:xbijective:st5}
\setcounter{tempcounter}{\value{enumi}}\addtocounter{tempcounter}{1}
\end{enumerate}

Henceforth, fix also $m\geq 1$, $\beta\in\FF$, $\gamma\in\FF^*$ and $\omega_0\in V\setminus\{0\}$ as given above. Moreover, define $\omega_j=x^j\omega_0$, for $0< j<m\ell$.

\begin{enumerate}[start=\value{tempcounter},label={\it Step~\arabic*.},ref={\it Step~\arabic*},wide =0\parindent, leftmargin = \parindent]
\item $V=\spann_\FF\pb{\omega_0,\omega_1,\ldots,\omega_{m\ell-1}}$.\\[5pt]
\noindent
We have $x\omega_j=\omega_{j+1}$ for $0\leq j\leq m\ell -2$ and $x\omega_{m\ell-1}=x^{m\ell}\omega_0=\gamma\omega_0$; $h\omega_j=f^{[j]}(\alpha)\omega_j=\lambda(j)\omega_j$. Regarding the action of $y$, we have $xy\omega_0=\beta\omega_0=\beta\gamma^{-1}x\omega_{m\ell-1}$ and since the action of $x$ is injective, it follows that $y\omega_0=\beta\gamma^{-1}\omega_{m\ell-1}$. Similarly, for $j\geq 1$, using Lemma~\ref{L:weightmods},
\begin{align*}
xy\omega_j=xyx^j\omega_0=\seq{q^j\beta+\sum\limits_{i=0}^{j-1}q^i g\seq{f^{[j-i-1]}(\alpha)}}x\omega_{j-1},
\end{align*}
so 
\begin{align*}
y\omega_j=\seq{q^j\beta+\sum\limits_{i=0}^{j-1}q^i g\seq{f^{[j-i-1]}(\alpha)}}\omega_{j-1}.
\end{align*}
In particular, $\spann_\FF\pb{\omega_0,\omega_1,\ldots,\omega_{m\ell-1}}$ is a nonzero submodule of $V$, hence equal to $V$.\label{P:irreps:fdiml:xbijective:st6}
\item Define $\mu:\ZZ\longrightarrow\FF$ by 
\begin{align*}
\mu(i)=q^j\beta+\sum\limits_{k=0}^{j-1}q^k g\seq{f^{[j-k-1]}(\alpha)}=q^j\beta+\sum\limits_{k=0}^{j-1}q^k g(\lambda(j-k-1)),
\end{align*}
where $0\leq j<m\ell$ and $i\equiv j\ \modd{m\ell}$. Then $\mu\in\Tf$ and $|\mu|=m$.\\[5pt]
\noindent
The proof that $\mu(i+1)=q\mu(i)+g(\lambda(i))$, for all $i\in\ZZ$, is analogous to the proof in \ref{P:irreps:fdiml:xbijective:st3} that $\lambda\in\Sf$, using also the equality $\mu_{m\ell}=\mu_0=\beta=\mu(0)$, from \ref{P:irreps:fdiml:xbijective:st5}, while $|\mu|=m$ by the definition of $m$.
\item For $i\in\ZZ$, write $i=k(m\ell)+j$ with $0\leq j<m\ell$ and set $\omega_i=\gamma^k\omega_j$. Then, 
\begin{equation}\label{E:reps:action:omegas}
x\omega_i=\omega_{i+1},\quad y\omega_i=\mu(i)\omega_{i-1}\quad \text{and}\quad h\omega_i=\lambda(i)\omega_i,\quad \text{for all $i\in\mathbb Z$.}
\end{equation}

\noindent
Write $i=k(m\ell)+j$, as above. Using the relations established in \ref{P:irreps:fdiml:xbijective:st6}, the definition of $\mu$ and $|\lambda|=\ell$, it is immediate to verify that $h\omega_i=\lambda(i)\omega_i$ for all $i$; that $x\omega_i=\omega_{i+1}$, as long as $j\leq m\ell-2$; and that $y\omega_i=\mu(i)\omega_{i-1}$, as long as $j\geq 1$.

Now suppose $j=m\ell-1$. Then $i+1=(k+1)(m\ell)$, so $x\omega_i=\gamma^k x\omega_{m\ell-1}=\gamma^{k+1} \omega_{0}=\omega_{i+1}$. Similarly, if $j=0$, then $i-1=(k-1)(m\ell)+m\ell-1$ and $y\omega_i=\gamma^k y\omega_0=\beta\gamma^{k-1} \omega_{m\ell-1}=\beta\omega_{i-1}=\mu(0)\omega_{i-1}=\mu(i)\omega_{i-1}$.
\item $V$ is isomorphic to $\Af/\FF[t^{\pm1}](t^{|\lambda||\mu|}-\gamma)$. In particular, $n=\dim_\FF V=|\lambda||\mu|$.\\[5pt]
\noindent
Consider the linear map $\phi:\Af\longrightarrow V$ defined by $\phi(t^i)=\omega_i$, for all $i\in\ZZ$. Comparing relations \eqref{E:reps:def:Af} and \eqref{E:reps:action:omegas}, we see that $\phi$ is an $\hqfg$-module homomorphism and, by \ref{P:irreps:fdiml:xbijective:st6}, $\phi$ is surjective. Moreover, by the definition of $\omega_i$,  
\begin{equation*}
\phi(t^{i+m\ell}-\gamma t^i)=\omega_{i+m\ell}-\gamma\omega_i=\gamma\omega_{i}-\gamma\omega_i=0, \quad \text{for all $i\in\ZZ$.}
\end{equation*}
Hence, $\phi$ factors through $\FF[t^{\pm1}](t^{|\lambda||\mu|}-\gamma)$, which by Lemma~\ref{L:irreps:fdiml:quotAB} is a maximal submodule of $\Af$. Thus $\phi$ induces a surjective homomorphism
\begin{equation*}
\overline\phi:\Af/\FF[t^{\pm1}](t^{|\lambda||\mu|}-\gamma)\longrightarrow V
\end{equation*}
and by the maximality of $\FF[t^{\pm1}](t^{|\lambda||\mu|}-\gamma)$ in $\Af$, $\overline\phi$ is injective, thence an isomorphism.
\end{enumerate}
\end{proof}	

The analogous result for $n$-dimensional simple modules $V$ such that $y^nV\neq 0$ uses the dual modules $\Bf$. This result below follows easily from Proposition~\ref{P:irreps:fdiml:xbijective} and the natural $\hqfg$-module isomorphism $V\simeq V^{**}$, where the action on the dual space is defined in terms of the order $2$ anti-automorphism $\iota$, as explained in Subsection~\ref{SS:irreps:Verma}.


 \begin{prop}\label{P:irreps:fdiml:ybijective}
Assume $\FF=\overline\FF$ and $q\neq 0$.
Let $V$ be a simple $\hqfg$-module with $\dim_\FF V=n$ and such that $y^nV\neq 0$. Then there are $\gamma\in\FF^*$, $\lambda\in \Sf$ and $\mu\in \Tf$ with $|\lambda|, |\mu|\geq 1$, $n=|\lambda||\mu|$ and
\begin{equation*}
V\simeq \Bf/\FF[t^{\pm1}](t^{|\lambda||\mu|}-\gamma).
\end{equation*}
\end{prop}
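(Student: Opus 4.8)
The plan is to derive Proposition~\ref{P:irreps:fdiml:ybijective} from Proposition~\ref{P:irreps:fdiml:xbijective} by dualizing, exactly as the sentence preceding the statement suggests. The key observation is that the order $2$ anti-automorphism $\iota$ (fixing $h$, interchanging $x$ and $y$) lets us turn a module on which $y$ acts bijectively into one on which $x$ acts bijectively, so that the already-proved result applies. Concretely, given a simple $\hqfg$-module $V$ with $\dim_\FF V = n$ and $y^n V \neq 0$, I would pass to the dual module $V^*$ with the $\iota$-twisted action $(a\cdot \varphi)(v) = \varphi(\iota(a)\cdot v)$ defined in Subsection~\ref{SS:irreps:Verma}. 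Since $\iota(y) = x$, the operator by which $x$ acts on $V^*$ is the transpose of the operator by which $y$ acts on $V$; because $V$ is finite-dimensional, these share the same rank, so $x^n V^* \neq 0$ follows from $y^n V \neq 0$. I would also note that $V^*$ is simple: the lattice of submodules of $V^*$ is anti-isomorphic to that of $V$ via annihilators (a routine consequence of $\iota$ being an anti-automorphism and $\dim_\FF V < \infty$), so simplicity of $V$ gives simplicity of $V^*$, and $\dim_\FF V^* = n$.

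Having established that $V^*$ is an $n$-dimensional simple module with $x^n V^* \neq 0$, Proposition~\ref{P:irreps:fdiml:xbijective} applies and yields $\gamma'\in\FF^*$, $\lambda\in\Sf$ and $\mu\in\Tf$ with $|\lambda|,|\mu|\geq 1$, $n = |\lambda||\mu|$, and an isomorphism
\begin{equation*}
V^* \simeq \Af/\FF[t^{\pm1}](t^{|\lambda||\mu|}-\gamma').
\end{equation*}
The final move is to take duals once more and use the canonical isomorphism $V \simeq V^{**}$ (valid because $\dim_\FF V < \infty$, and an $\hqfg$-module map because double-dualizing applies $\iota$ twice, recovering the identity as $\iota^2 = 1_{\hqfg}$). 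Thus $V \simeq \bigl(\Af/\FF[t^{\pm1}](t^{|\lambda||\mu|}-\gamma')\bigr)^*$, and the task reduces to identifying this dual of a quotient of $\Af$ with the stated quotient of $\Bf$. The cleanest route is to recall from Subsection~\ref{SS:irreps:Verma} that $\Bf$ was \emph{constructed} as (a submodule of) the $\iota$-dual of $\Af$ via the explicit dual-basis computation giving the action \eqref{E:reps:def:Bf}. Dualizing commutes with the quotient construction in the sense that the finite dual of $\Af/M$ is the submodule of $(\Af)^*$ annihilated by $M$; since $M = \FF[t^{\pm1}](t^{|\lambda||\mu|}-\gamma')$ has a corresponding submodule of $\Bf$, one reads off directly that the dual quotient is $\Bf/\FF[t^{\pm1}](t^{|\lambda||\mu|}-\gamma)$ for a suitable $\gamma\in\FF^*$ (the value of $\gamma$ being determined by $\gamma'$ through the explicit action formulas).

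I expect the main obstacle to be bookkeeping rather than any genuine difficulty: one must be careful to track how the parameter $\gamma'$ transforms into $\gamma$ under the two dualizations, and to verify that the submodule structure matches up so that the maximal submodule of the $\Af$-quotient corresponds precisely to $\FF[t^{\pm1}](t^{|\lambda||\mu|}-\gamma)\subseteq\Bf$ rather than to some other eigenvalue shift. The symmetry under $\iota$ between the defining actions \eqref{E:reps:def:Af} and \eqref{E:reps:def:Bf} — where $x$ and $y$ swap roles and the coefficient $\mu(i+1)$ migrates from the $y$-action on $\Af$ to the $x$-action on $\Bf$ — is exactly what guarantees that the quotient by $t^{|\lambda||\mu|}-\gamma$ again lands in the same family, so the identification should go through cleanly. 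In a fully written proof I would likely state this as a short lemma (the finite dual of $\Af/\FF[t^{\pm1}](t^{N}-\gamma')$ is $\Bf/\FF[t^{\pm1}](t^{N}-\gamma)$ with $N=|\lambda||\mu|$), but since the statement only asserts existence of \emph{some} $\gamma\in\FF^*$, even the existence of such an identification suffices and the explicit value of $\gamma$ need not be pinned down.
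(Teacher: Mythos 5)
Your proposal is correct and matches the paper's proof exactly: the paper simply notes that the result "follows easily from Proposition~\ref{P:irreps:fdiml:xbijective} and the natural $\hqfg$-module isomorphism $V\simeq V^{**}$, where the action on the dual space is defined in terms of the order $2$ anti-automorphism $\iota$," which is precisely your dualization argument (twisted dual, transpose-rank to get $x^nV^*\neq 0$, simplicity of $V^*$, then $V\simeq V^{**}$). One bookkeeping caution for a fully written version: the annihilator $M^\perp=(\Af/M)^*$ is \emph{not} contained in the finite dual $\Bf$ (a nonzero functional vanishing on $M=\FF[t^{\pm1}](t^{n}-\gamma')$ satisfies $\varphi(t^{j+kn})=\gamma'^{\,k}\varphi(t^j)$ and so is never finitely supported), so rather than "reading off" the identification inside $(\Af)^*$ you should do the direct dual-basis computation on the $n$-dimensional quotient itself, which yields $\bigl(\Af/\FF[t^{\pm1}](t^{n}-\gamma')\bigr)^*\simeq \Bf/\FF[t^{\pm1}](t^{n}-\gamma)$ with $\gamma=\gamma'^{-1}$ --- exactly the routine verification you anticipated.
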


\smallskip

It remains to study those finite-dimensional simple modules on which both $x$ and $y$ act nilpotently. Fix $\alpha\in\FF$ and set $\nu_\alpha:\ZZ_{\geq 0}\longrightarrow\FF$ so that
\begin{equation}\label{E:reps:def:nualpha}
\nu_\alpha(i)=\sum\limits_{j=0}^{i-1}q^jg(f^{[i-1-j]}(\alpha)), \quad\text{for all $i\geq 0$.} 
\end{equation}
Define the $\hqfg$-module $\Cf=\FF[t]$ with action
\begin{equation}\label{E:reps:def:Cf}
ht^i=f^{[i]}(\alpha) t^i, \quad xt^i=t^{i+1}, \quad yt^{i}=\nu_\alpha(i) t^{i-1}, \quad \text{for all $i\geq 0$,}
\end{equation}
adopting the convention that $yt^{0}=0$. A routine check shows that \eqref{E:reps:def:Cf} does define an $\hqfg$-module structure on $\FF[t]$.

\begin{lemma}\label{L:irreps:fdiml:Cfsimple}
Let $\alpha\in\FF$ and $\nu_\alpha$ be given by~\eqref{E:reps:def:nualpha}. Suppose that $\nu_\alpha(n)=0$, for some $n\geq 1$. Then $\FF[t]t^n$ is a submodule of $\Cf$ and the quotient module $\Cf/\FF[t]t^n$ is simple if and only if $\nu_\alpha(1)\cdots\nu_\alpha(n-1)\neq0$.
\end{lemma}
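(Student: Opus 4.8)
The plan is to analyze the module structure of $\Cf=\FF[t]$ under the assumption $\nu_\alpha(n)=0$, and to determine precisely when the quotient $\Cf/\FF[t]t^n$ is simple. First I would verify that $\FF[t]t^n$ is a submodule. The subspace $\FF[t]t^n=\spann_\FF\pb{t^i\mid i\geq n}$ is clearly stable under $h$ (which acts diagonally) and under $x$ (which raises the index). For stability under $y$, the only delicate point is the action on the bottom element $t^n$: since $yt^n=\nu_\alpha(n)t^{n-1}=0$ by hypothesis, $y$ does not push us below degree $n$, so $\FF[t]t^n$ is indeed a submodule. This gives the quotient $\overline{\mathsf C}=\Cf/\FF[t]t^n$, which is $n$-dimensional with basis $\pb{T_0,\ldots,T_{n-1}}$, where $T_i=t^i+\FF[t]t^n$, and action $hT_i=f^{[i]}(\alpha)T_i$, $xT_i=T_{i+1}$ (with $xT_{n-1}=0$), and $yT_i=\nu_\alpha(i)T_{i-1}$ (with $yT_0=0$).

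Next I would prove the equivalence. For the direction that simplicity implies $\nu_\alpha(1)\cdots\nu_\alpha(n-1)\neq0$, I would argue by contraposition: if some $\nu_\alpha(k)=0$ with $1\leq k\leq n-1$, then I would exhibit a proper nonzero submodule. The natural candidate is $\FF[t]t^k/\FF[t]t^n=\spann_\FF\pb{T_k,\ldots,T_{n-1}}$; the same computation as above shows this is $y$-stable precisely because $yT_k=\nu_\alpha(k)T_{k-1}=0$, and it is visibly stable under $h$ and $x$, yet it is a proper nonzero submodule, contradicting simplicity.

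For the converse, assume $\nu_\alpha(1)\cdots\nu_\alpha(n-1)\neq0$ and let $W$ be a nonzero submodule of $\overline{\mathsf C}$. The idea is to use the diagonalizability of $h$ together with the bijective-on-the-relevant-range action of $x$ and $y$. If all the eigenvalues $f^{[0]}(\alpha),\ldots,f^{[n-1]}(\alpha)$ of $h$ were distinct, then $h$-stability of $W$ would force $W$ to be a sum of one-dimensional eigenspaces $\FF T_i$, so $W$ contains some $T_i$; then repeated application of $x$ (reaching higher indices) and of $y$ (reaching lower indices, with all multipliers $\nu_\alpha(1),\ldots,\nu_\alpha(n-1)$ nonzero) would generate every $T_j$, giving $W=\overline{\mathsf C}$. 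The main obstacle is that the $h$-eigenvalues need not be distinct when $f$ has periodic behavior on the orbit of $\alpha$, so I cannot simply split $W$ into one-dimensional pieces. To handle this I would instead take any nonzero $w\in W$ and look at its nonzero coordinates in the basis $\pb{T_i}$; letting $k$ be the largest index with a nonzero coordinate, I would apply $y^k$ to $w$. Since each downward step contributes a factor $\nu_\alpha(i)$ with $1\leq i\leq k\leq n-1$, all of which are nonzero by hypothesis, and since $y$ annihilates $T_0$, the element $y^k w$ is a nonzero scalar multiple of $T_0$; hence $T_0\in W$. Applying powers of $x$ then yields $T_0,\ldots,T_{n-1}\in W$, so $W=\overline{\mathsf C}$, proving simplicity. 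This coordinate-lowering argument bypasses any need for distinct eigenvalues and is the crux of the proof.
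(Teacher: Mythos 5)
Your proof is correct and takes essentially the same approach as the paper: the identical submodule check (the only point being $yt^n=\nu_\alpha(n)t^{n-1}=0$), the identical proper submodule $\spann_\FF\pb{T_k,\ldots,T_{n-1}}$ when some $\nu_\alpha(k)=0$, and the same raise/lower generation argument for simplicity. The only (immaterial) difference is direction: the paper first applies powers of $x$ to a nonzero element of $W$ to reach $\omega_{n-1}$ and then applies $y^{n-1}$ to land on $\omega_0$, whereas you apply $y^k$ to kill all but the top coordinate and reach $T_0$ directly --- a mirror image of the same idea.
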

\begin{proof}
It is clear that $\FF[t]t^n$ is closed under the actions of $x$ and $h$. Also, 
$y t^n=\nu_\alpha(n)t^{n-1}=0$, so $\FF[t]t^n$ is closed under the action of $y$ as well, and is thus a submodule of $\Cf$.

The quotient module has basis $\pb{\omega_0,\ldots,\omega_{n-1}}$, where $\omega_i=t^i+\FF[t]t^n$ and the action is given by
\begin{gather}\label{E:irreps:fdiml:Cfsimple:basis}
\begin{split}
x\omega_i=\omega_{i+1}, \ \text{if $i<n-1$}, \quad x\omega_{n-1}=0,\\
h\omega_i=f^{[i]}(\alpha)\omega_i,\quad y\omega_i=\nu_\alpha(i)\omega_{i-1}, \ \text{if $i>0$}, \quad y\omega_0=0. 
\end{split}
\end{gather}
Suppose $\nu_\alpha(i)=0$ for some $1\leq i\leq n-1$. Then $\spann_\FF\pb{\omega_i,\omega_{i+1},\ldots,\omega_{n-1}}$ is a proper nonzero submodule, so $\Cf/\FF[t]t^n$ is not simple in this case.

Conversely, suppose that $\nu_\alpha(i)\neq0$ for all $1\leq i\leq n-1$. Let $W\subseteq \Cf/\FF[t]t^n$ be a nonzero submodule. Then applying $x$ enough times to a nonzero element of $W$, we deduce that $\omega_{n-1}\in W$. Then $\nu_\alpha(1)\cdots\nu_\alpha(n-1)\omega_0=y^{n-1}\omega_{n-1}\in W$. Since, by hypothesis, $\nu_\alpha(1)\cdots\nu_\alpha(n-1)\neq 0$, we get $\omega_0\in W$ and, $\omega_0$ being a generator of $\Cf/\FF[t]t^n$, it follows that $W=\Cf/\FF[t]t^n$. This concludes the proof.
\end{proof}


Now we characterize the finite-dimensional simple $\hqfg$-modules on which both $x$ and $y$ act nilpotently.

\begin{prop}\label{P:irreps:fdiml:xynilpotent}
Assume $\FF=\overline\FF$.
Let $V$ be a simple $\hqfg$-module with $\dim_\FF V=n$ and $x^nV=0=y^nV$. Then there is $\alpha\in\FF$ such that $\nu_\alpha(n)=0$ and $V\simeq \Cf/\FF[t]t^n$, where $\nu_\alpha$ is given by~\eqref{E:reps:def:nualpha}.
\end{prop}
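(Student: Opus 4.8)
The plan is to realize $V$ as a cyclic module generated by a single vector annihilated by $y$, exactly as $\omega_0=t^0+\FF[t]t^n$ generates $\Cf/\FF[t]t^n$ and is killed by $y$ there. The whole module will then be recovered by applying powers of $x$ to this generator, and the resulting structure constants will be forced to equal the scalars $\nu_\alpha(k)$.

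First I would produce the generator. Since $y^nV=0$, the operator $y$ is nilpotent on $V$, so $\ker y\neq 0$; and the relation $yh=f(h)y$ shows $\ker y$ is $h$-invariant, since $y(hv)=f(h)(yv)=0$ whenever $yv=0$. As $\FF=\overline\FF$ and $\ker y$ is a nonzero finite-dimensional $h$-invariant subspace, I may choose $v_0\in\ker y\setminus\pb{0}$ with $hv_0=\alpha v_0$ for some $\alpha\in\FF$. The key observation is that $(xy)v_0=x(yv_0)=0$, i.e.\ the $xy$-eigenvalue of $v_0$ vanishes; this is precisely what makes the structure constants coincide with $\nu_\alpha$, and it is the feature distinguishing the nilpotent case from the bijective cases of Propositions~\ref{P:irreps:fdiml:xbijective} and~\ref{P:irreps:fdiml:ybijective}, where that eigenvalue was a free parameter.

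Next I would record the action on the vectors $x^kv_0$. From $hx^k=x^kf^{[k]}(h)$ we obtain $h(x^kv_0)=f^{[k]}(\alpha)x^kv_0$, while Lemma~\ref{L:basic:conformal}\ref{L:basic:conformal:a}, combined with $yv_0=0$ and $hv_0=\alpha v_0$, gives
\begin{equation*}
y(x^kv_0)=q^kx^k(yv_0)+x^{k-1}\theta_k v_0=\theta_k(\alpha)\,x^{k-1}v_0=\nu_\alpha(k)\,x^{k-1}v_0,
\end{equation*}
since $\theta_k(\alpha)=\sum_{i=0}^{k-1}q^ig\seq{f^{[k-1-i]}(\alpha)}=\nu_\alpha(k)$. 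Hence $\spann_\FF\pb{x^kv_0\mid k\geq 0}$ is stable under $h$, $x$ and $y$, and by the simplicity of $V$ it equals $V$. Because $x^nV=0$, the operator $x$ is also nilpotent; letting $n'\geq 1$ be minimal with $x^{n'}v_0=0$, a standard argument (applying descending powers of $x$ to a hypothetical linear dependence) shows $v_0,xv_0,\ldots,x^{n'-1}v_0$ are linearly independent. As these vectors span $V$, we get $n'=\dim_\FF V=n$. Applying $y$ to the identity $x^nv_0=0$ and using the displayed formula yields $0=\nu_\alpha(n)x^{n-1}v_0$, and since $x^{n-1}v_0\neq 0$ this forces $\nu_\alpha(n)=0$.

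Finally, comparing \eqref{E:reps:def:Cf} with the action just computed, the linear map $\phi:\Cf\longrightarrow V$ determined by $\phi(t^i)=x^iv_0$ (for $i\geq 0$) is a surjective $\hqfg$-module homomorphism, and its kernel is exactly $\spann_\FF\pb{t^i\mid i\geq n}=\FF[t]t^n$; therefore $V\simeq\Cf/\FF[t]t^n$, as claimed. I expect the only genuine subtlety to lie in the very first step---securing a common $h$-eigenvector inside $\ker y$---because the entire identification rests on the $xy$-eigenvalue being $0$, so that the structure constants match $\nu_\alpha$ exactly; the linear-independence argument and the verification that $\phi$ is a homomorphism are then routine bookkeeping.
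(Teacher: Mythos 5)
Your proof is correct and follows essentially the same route as the paper's: an $h$-eigenvector $v_0$ in $\ker y$ (which is $h$-invariant by $yh=f(h)y$), the identification of the structure constants $y(x^kv_0)=\nu_\alpha(k)x^{k-1}v_0$ via Lemma~\ref{L:basic:conformal}, the deduction $\nu_\alpha(n)=0$ from $x^nv_0=0$, and the explicit comparison with \eqref{E:reps:def:Cf}. The only cosmetic difference is that you prove linear independence of $v_0,xv_0,\ldots,x^{n-1}v_0$ directly by the standard nilpotency argument, whereas the paper gets spanning by $n$ vectors from the PBW basis of $\hqfg$ applied to the cyclic generator $\omega_0$ and then concludes they form a basis from $\dim_\FF V=n$.
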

\begin{proof}
Let $X=\pb{v\in V\mid yv=0}$. Then $X\neq\pb{0}$ because $y^nV=0$. Also, $yhX=f(h)yX=0$, so $hX\subseteq X$. Thus, $h$ has some eigenvector $0\neq\omega_0$ in $X$, say $h\omega_0=\alpha \omega_0$, with $\alpha\in\FF$. By the simplicity of $V$, we know that $V=\hqfg\omega_0$. But $\pb{x^ih^jy^k\mid i,j,k\geq 0}$ is a basis of $\hqfg$ and, by hypothesis, $x^n\omega_0=0$, so $V=\spann_\FF\pb{\omega_0, \omega_1,\ldots ,\omega_{n-1}}$, where $\omega_i=x^i\omega_0$, for $0<i<n$. Moreover, since $\dim_\FF V=n$, these vectors must form a basis of $V$.

Then, using Lemma~\ref{L:basic:conformal}, we have
\begin{gather}\label{E:irreps:fdiml:xynilpotent}
\begin{split}
x\omega_i=\omega_{i+1}, \ \text{if $i<n-1$}, \quad x\omega_{n-1}=0,\\
h\omega_i=f^{[i]}(\alpha)\omega_i,\quad y\omega_i=\nu_\alpha(i)\omega_{i-1}, \ \text{if $i>0$}, \quad y\omega_0=0. 
\end{split}
\end{gather}
In addition, $0=yx^n\omega_0=\nu_\alpha(n)\omega_{n-1}$, and hence $\nu_\alpha(n)=0$. Now, by comparing the relations \eqref{E:irreps:fdiml:Cfsimple:basis} and \eqref{E:irreps:fdiml:xynilpotent}, the isomorphism $V\simeq \Cf/\FF[t]t^n$ becomes clear.
\end{proof}


We can finally prove our main result which classifies, up to isomorphism, all finite-dimensional simple $\hqfg$-modules.

\begin{thm}\label{T:irreps:fdiml:all}
Assume $\FF=\overline\FF$ and $q\neq 0$. Then any simple $n$-dimensional $\hqfg$-module is isomorphic to exactly one of the following simple modules:
\begin{enumerate}[label=(\alph*)]
\item $\Af/\FF[t^{\pm1}](t^{|\lambda||\mu|}-\gamma)$, for some $\lambda\in\Sf$, $\mu\in\Tf$ and $\gamma\in\FF^*$ such that $n=|\lambda||\mu|$.\label{T:irreps:fdiml:all:a}
\item $\Bf/\FF[t^{\pm1}](t^{|\lambda||\mu|}-\gamma)$, for some $\lambda\in\Sf$, $\mu\in\Tf$ and $\gamma\in\FF^*$ such that $n=|\lambda||\mu|$ and $\mu(i)=0$ for some $0\leq i<|\lambda||\mu|$.\label{T:irreps:fdiml:all:b}
\item $\Cf/\FF[t]t^n$, for some $\alpha\in\FF$ such that $\nu_\alpha(n)=0$ and $\nu_\alpha(i)\neq0$ for all $1\leq i\leq n-1$.\label{T:irreps:fdiml:all:c}
\end{enumerate}
\end{thm}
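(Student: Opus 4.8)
The plan is to assemble Theorem~\ref{T:irreps:fdiml:all} from the three structural propositions already established, organizing the argument around a case analysis based on the action of $x$ and $y$. Given a simple $n$-dimensional module $V$, the key trichotomy is whether $x^nV\neq 0$, whether $y^nV\neq 0$, or whether both $x$ and $y$ act nilpotently (i.e.\ $x^nV=0=y^nV$, since $\dim_\FF V=n$ forces nilpotency to set in by the $n$-th power). These three cases are handled respectively by Proposition~\ref{P:irreps:fdiml:xbijective}, Proposition~\ref{P:irreps:fdiml:ybijective}, and Proposition~\ref{P:irreps:fdiml:xynilpotent}. First I would observe that these cases are exhaustive: if $x^nV=0$ and $y^nV=0$ we are in case~\ref{T:irreps:fdiml:all:c}, while otherwise at least one of $x^nV\neq0$ or $y^nV\neq 0$ holds, placing us in \ref{T:irreps:fdiml:all:a} or \ref{T:irreps:fdiml:all:b}.

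The existence half of each case is then immediate from the cited propositions, modulo extracting the extra constraints listed in the theorem. For \ref{T:irreps:fdiml:all:c}, Lemma~\ref{L:irreps:fdiml:Cfsimple} supplies precisely the nonvanishing condition $\nu_\alpha(1)\cdots\nu_\alpha(n-1)\neq 0$ required for simplicity, so one only needs to note that simplicity of $V$ forces this. For \ref{T:irreps:fdiml:all:b}, the extra hypothesis that $\mu(i)=0$ for some $0\leq i<|\lambda||\mu|$ must be justified: on a module where $y$ acts nilpotently on no vector would fail, but here the dual construction means $y$ annihilates some basis vector, which translates (via the $\Bf$-action in \eqref{E:reps:def:Bf}) into the vanishing of some $\mu(i)$. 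I would verify this by inspecting where $y$ has a kernel on $\Bf/\FF[t^{\pm1}](t^{|\lambda||\mu|}-\gamma)$.

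The genuinely delicate part is the word \emph{exactly one}: I must show the three families are pairwise disjoint and that within each family the listed parameters are uniquely determined. The disjointness of \ref{T:irreps:fdiml:all:c} from the other two is structural, since in \ref{T:irreps:fdiml:all:c} both $x$ and $y$ act nilpotently whereas in \ref{T:irreps:fdiml:all:a} the operator $x$ acts bijectively (Step~1 of Proposition~\ref{P:irreps:fdiml:xbijective}) and dually $y$ acts bijectively in \ref{T:irreps:fdiml:all:b}. The subtle overlap is between \ref{T:irreps:fdiml:all:a} and \ref{T:irreps:fdiml:all:b}: a module can have both $x^nV\neq0$ and $y^nV\neq0$, so a single $V$ could a priori be presented in both forms. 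Here the added condition in~\ref{T:irreps:fdiml:all:b} does the separating work---I expect that the $\Bf$-type modules arising when $x$ also acts bijectively are already isomorphic to $\Af$-type modules, and the constraint $\mu(i)=0$ is exactly what singles out the genuinely new modules (those where $x$ is \emph{not} bijective but $y$ is). I anticipate this disjointness and uniqueness bookkeeping to be the main obstacle, and I would defer the full isomorphism analysis to the results of Subsection~\ref{SS:irreps:isos}, invoking them to pin down when two quotients of $\Af$ (or of $\Bf$) with different $(\lambda,\mu,\gamma)$ coincide.

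Finally, I would remark that the $n$-dimensionality constraints tie the parameters together: in cases \ref{T:irreps:fdiml:all:a} and \ref{T:irreps:fdiml:all:b} one has $n=|\lambda||\mu|$ forced by the dimension count in the last step of Proposition~\ref{P:irreps:fdiml:xbijective}, and in case~\ref{T:irreps:fdiml:all:c} the integer $n$ is the least positive index with $\nu_\alpha(n)=0$. With exhaustiveness, existence, and pairwise disjointness in hand, the classification up to isomorphism follows, with the precise enumeration of isomorphisms within each family supplied by Subsection~\ref{SS:irreps:isos}.
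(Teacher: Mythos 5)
Your overall architecture is exactly that of the paper: the trichotomy $x^nV\neq 0$ / ($x^nV=0$, $y^nV\neq0$) / ($x^nV=0=y^nV$), handled by Propositions~\ref{P:irreps:fdiml:xbijective}, \ref{P:irreps:fdiml:ybijective} and \ref{P:irreps:fdiml:xynilpotent} respectively, with Lemmas~\ref{L:irreps:fdiml:quotAB} and \ref{L:irreps:fdiml:Cfsimple} supplying simplicity, and disjointness read off from how $x^n$ and $y^n$ act. However, one step as you describe it would fail: your justification of the extra condition $\mu(i)=0$ in case~\ref{T:irreps:fdiml:all:b}. You propose to ``inspect where $y$ has a kernel'' on $\Bf/\FF[t^{\pm1}](t^{|\lambda||\mu|}-\gamma)$ and assert that ``$y$ annihilates some basis vector.'' It does not: by \eqref{E:reps:def:Bf}, $y$ sends $t^i\mapsto t^{i-1}$, so on the quotient $y$ acts bijectively and $y^n$ acts by the scalar $\gamma^{-1}$ --- indeed you yourself use this bijectivity of $y$ two paragraphs later to separate \ref{T:irreps:fdiml:all:b} from \ref{T:irreps:fdiml:all:c}, so your account is internally inconsistent. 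The operator with a kernel is $x$: since $xt^i=\mu(i+1)t^{i+1}$, the element $x^n=x^{|\lambda||\mu|}$ acts on the quotient by the scalar $\gamma\prod_{j=0}^{|\lambda||\mu|-1}\mu(j)$, and because this case arises precisely when $x^nV=0$, some $\mu(j)$ must vanish. This is the paper's one-line argument, and it simultaneously settles your ``subtle overlap'' between \ref{T:irreps:fdiml:all:a} and \ref{T:irreps:fdiml:all:b}: modules of type \ref{T:irreps:fdiml:all:a} have $x^n$ acting by $\gamma\neq0$, while the constraint $\mu(i)=0$ forces $x^n$ to act by $0$ on type \ref{T:irreps:fdiml:all:b}, so no appeal to the prioritized case order beyond this is needed.

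A second, milder miscalibration: your reading of ``exactly one'' as requiring that ``within each family the listed parameters are uniquely determined'' is both unnecessary and false. Proposition~\ref{P:irreps:isosA} shows that shifting $(\lambda,\mu)$ as in \eqref{E:irreps:isosA} produces isomorphic quotients of $\Af$ with genuinely different parameter functions, so parameters within a family are not unique. The theorem only asserts mutual exclusivity of the three families, which the paper proves self-containedly from the scalars by which $x^n$ and $y^n$ act; deferring to Subsection~\ref{SS:irreps:isos} is therefore not needed for this theorem (though it is harmless, as those propositions do not depend on it), and promising parameter uniqueness would leave you attempting to prove a false statement.
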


\begin{proof}
The modules in \ref{T:irreps:fdiml:all:a}--\ref{T:irreps:fdiml:all:c} are simple by Lemma~\ref{L:irreps:fdiml:quotAB} and Lemma~\ref{L:irreps:fdiml:Cfsimple}.

Conversely, let $V$ be a simple $n$-dimensional $\hqfg$-module. If $x^nV\neq 0$ then, by Proposition~\ref{P:irreps:fdiml:xbijective}, $V$ is isomorphic to a module of the form given in \ref{T:irreps:fdiml:all:a}.
Suppose now that $x^nV=0$. If $y^nV=0$ also holds, then by Proposition~\ref{P:irreps:fdiml:xynilpotent} and Lemma~\ref{L:irreps:fdiml:Cfsimple}, $V$ is isomorphic to a module of the form given in \ref{T:irreps:fdiml:all:c}.
Otherwise, we have $x^nV=0$ and $y^nV\neq0$. Then, by Proposition~\ref{P:irreps:fdiml:ybijective}, $V$ is isomorphic to a module of the form given in \ref{T:irreps:fdiml:all:b}. Moreover, from \eqref{E:reps:def:Bf}, $x^n=x^{|\lambda||\mu|}$ acts on $V$ by the scalar $\gamma\prod_{j=0}^{m\ell-1}\mu(j)$, so $\mu(j)=0$ for some $0\leq j<m\ell$.

Finally, to see that modules from distinct parts of \ref{T:irreps:fdiml:all:a}--\ref{T:irreps:fdiml:all:c} are non-isomorphic, we just need to observe that indeed $x^nV\neq 0$ for all modules from \ref{T:irreps:fdiml:all:a}, since $x^n$ acts by multiplication by $\gamma$; $x^nV=0$ for all modules from \ref{T:irreps:fdiml:all:b} (by the previous paragraph) and \ref{T:irreps:fdiml:all:c}; $y^nV\neq0$ for all modules from \ref{T:irreps:fdiml:all:b}, since $y^n=y^{|\lambda||\mu|}$ acts on such a $V$ by the scalar $\gamma^{-1}$; and $y^nV=0$ for all modules from \ref{T:irreps:fdiml:all:c}.
\end{proof}

\begin{exam}
Assume that $q\neq 0$, $f=h^a$ and $g=h^b$, with $a, b\geq 1$. 

The finite-dimensional simple modules of the form $\Cf/\FF[t]t^n$ depend on the existence of $\alpha$ such that $\nu_\alpha(n)=0$ and $\nu_\alpha(i)\neq0$ for all $1\leq i\leq n-1$, see~\eqref{E:reps:def:nualpha}. In case $n=1$, and since $b\geq 1$, the only possibility is the \textit{trivial} module $\Cf[0]/\FF[t]t$ on which all the generators act as $0$. If we define the polynomial $\nu_t(n)=\sum_{j=0}^{n-1}q^j t^{a^{n-1-j}b}$ of degree $a^{n-1}b$ in the variable $t$ then, for $n>1$, these modules are determined solely by the roots of $\nu_t(m)$, for $m\leq n$, and that study depends heavily on $q$ and the base field $\FF$. 

Hence, we focus on the simple quotients of the modules $\Af$ and $\Bf$, and it suffices to determine the possibilities for $\lambda\in\Sf$ and $\mu\in\Tf$, with $|\lambda|, |\mu|\geq 1$.

For $\ell, m\geq 0$, define
\begin{align*}
\Sf^\ell=\pb{\lambda\in\Sf\mid |\lambda|=\ell}\quad\text{and}\quad \Tf^m= \pb{\mu\in\Tf\mid |\mu|=m}.
\end{align*}
Let $\lambda\in\Sf$ and set $\alpha=\lambda(0)$. Then, $\lambda(i)=\alpha^{a^i}$, for all $i\geq 0$. If either $\alpha=0$ or $a=1$, then $\lambda\equiv \alpha$ and $|\lambda|=1$. So assume that $\alpha\neq 0$ and $a\geq 2$. In this case, if $\alpha^{a^k -1}\neq 1$ for all $k\geq 1$ then $|\lambda|=0$. Otherwise, if $\ell=\min\pb{k\geq 1\mid \alpha^{a^k -1}=1}$, then $\alpha$ determines a unique $\lambda\in\Sf^\ell$ with $\lambda(i)=\alpha^{a^j}$, for all $i\in\ZZ$, where $0\leq j<\ell$ and $i\equiv j\modd{\ell}$.

Next we determine all possible $\mu\in\Tf$ with $|\mu|<\infty$. Recall that, by Lemma~\ref{L:irreps:Verma:mu}, $\mu$ is completely determined by $\alpha$ and $\mu(0)$. Notice also that $\Xi=\nu_\alpha(\ell)$, where $\Xi$ is the scalar appearing in~\eqref{E:irreps:fdiml:xbijective:defmusub}.

\begin{enumerate}[start=1,label={\it Case~\Alph*:},wide =0\parindent, leftmargin = \parindent]
\item $q$ is not a root of unity.

Then, by~\eqref{E:irreps:fdiml:xbijective:defmusub}, the only $\mu\in\Tf$ with $|\mu|<\infty$ has $|\mu|=1$ and $\mu(0)=\frac{\nu_\alpha(\ell)}{1-q^\ell}$.

\item $q$ is a root of unity. 
\begin{itemize}
\item Suppose $q^\ell=1$. Then either $\nu_\alpha(\ell)=0$, $\mu(0)=\beta\in\FF$ is arbitrary and $|\mu|=1$; or $\nu_\alpha(\ell)\neq 0$, $\mu(0)=\beta\in\FF$ is arbitrary, $\chara(\FF)=p>0$ and $|\mu|=p$.
\item Suppose $q^\ell\neq1$. Then either $\mu(0)=\frac{\nu_\alpha(\ell)}{1-q^\ell}$ and $|\mu|=1$; or otherwise $\mu(0)\in\FF\setminus\pb{\frac{\nu_\alpha(\ell)}{1-q^\ell}}$ is arbitrary and $|\mu|=|\langle q^\ell\rangle|$.
\end{itemize}
\end{enumerate}
\end{exam}

\subsection{Isomorphisms between finite-dimensional simple $\hqfg$-modules}\label{SS:irreps:isos}

In this subsection we study when two of the modules appearing in the classification given in Theorem~\ref{T:irreps:fdiml:all} are isomorphic. It follows from the proof of that result that one module from one of the items in that theorem cannot be isomorphic to one from any of the other items, but it is possible for two distinct ones of the same type to be isomorphic to each other. Also, removing the restriction in item \ref{T:irreps:fdiml:all:b} that $\mu(i)=0$ for some $0\leq i<|\lambda||\mu|$, it becomes possible for a simple module of the form given in \ref{T:irreps:fdiml:all:b} to be isomorphic to one of the form given in \ref{T:irreps:fdiml:all:a}.

\begin{prop}\label{P:irreps:isosA}
Assume $q\neq 0$ and let $\lambda, \lambda'\in\Sf$, $\mu\in\Tf$, $\mu'\in\Tf[q,g,\lambda']$ and $\gamma, \gamma'\in\FF^*$, with $|\lambda|, |\lambda'|, |\mu|, |\mu'|\geq 1$. Then
\begin{equation*}
\Af/\FF[t^{\pm1}](t^{|\lambda||\mu|}-\gamma)\simeq \Af[\lambda', \mu']/\FF[t^{\pm1}](t^{|\lambda'||\mu'|}-\gamma') 
\end{equation*}
if and only if $\gamma=\gamma'$ and there exist $0\leq i<|\lambda|$, $0\leq j<|\mu|$ such that 
\begin{equation}\label{E:irreps:isosA}
\lambda'(k)=\lambda(k+i)\quad \text{and}\quad \mu'(k)=\mu(k+i+j|\lambda|),\quad \text{for all $k\in\ZZ$.}
\end{equation}
\end{prop}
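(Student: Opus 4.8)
The plan is to work with the explicit weight-module structure of the simple quotients recorded in the proof of Lemma~\ref{L:irreps:fdiml:quotAB}. Set $N=|\lambda||\mu|$ and $N'=|\lambda'||\mu'|$, and let $\pb{T_j}$ (resp.\ $\pb{T'_j}$) be the standard basis of $V=\Af/\FF[t^{\pm1}](t^{N}-\gamma)$ (resp.\ $V'=\Af[\lambda',\mu']/\FF[t^{\pm1}](t^{N'}-\gamma')$), so that $T_{k+N}=\gamma T_k$, $xT_j=T_{j+1}$, $hT_j=\lambda(j)T_j$, $yT_j=\mu(j)T_{j-1}$, and similarly for $V'$. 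I would first isolate the three features to be used: (i) $x^{N}$ acts on $V$ by the scalar $\gamma$, since $x^{N}T_j=T_{j+N}=\gamma T_j$; (ii) the commuting operators $h$ and $xy$ act diagonally in $\pb{T_j}$, with $T_j$ of joint eigenvalue $\seq{\lambda(j),\mu(j)}$; and (iii) these joint eigenvalues are pairwise distinct for $0\le j<N$. Property (iii) is the heart of the matter: since $|\lambda|\ge1$, an equality $\lambda(j)=\lambda(j')$ forces $|\lambda|\mid j-j'$, placing $j,j'$ in a common $h$-eigenspace, on which Lemma~\ref{L:irreps:Verma:periodicl}\ref{L:irreps:Verma:periodicl:c} shows the $\mu$-values are distinct unless $j=j'$.

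For the forward implication, let $\psi\colon V\to V'$ be an isomorphism. Comparing dimensions gives $N=N'$, and since $\psi$ intertwines $x^{N}$, feature (i) applied to both modules yields $\gamma=\gamma'$. As $\psi$ commutes with $h$ and $xy$, it maps each one-dimensional joint eigenline $\FF T_j$ onto the unique joint eigenline of $V'$ with the same eigenvalue pair; by (iii) for $V'$ this produces a bijection $\pi$ of $\pb{0,\dots,N-1}$ with $\psi(T_j)\in\FF T'_{\pi(j)}$ and $\seq{\lambda(j),\mu(j)}=\seq{\lambda'(\pi(j)),\mu'(\pi(j))}$. Intertwining with $x$ (recalling $xT'_{N-1}=\gamma'T'_0$) forces $\pi(j+1)\equiv\pi(j)+1\modd N$, so $\pi$ is the cyclic shift $\pi(j)\equiv j+s\modd N$ with $s=\pi(0)$. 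Hence $\lambda'(k)=\lambda(k-s)$ and $\mu'(k)=\mu(k-s)$ for all $k$. Writing $-s\equiv i+j|\lambda|\modd N$ with $0\le i<|\lambda|$ and $0\le j<|\mu|$ via the division algorithm, and using that $\lambda$ has period $|\lambda|$ while $\mu$ has period $N$, these relations become exactly \eqref{E:irreps:isosA}.

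For the converse I would construct the isomorphism directly. Assuming $\gamma=\gamma'$ and \eqref{E:irreps:isosA}, the shift relations immediately give $|\lambda'|=|\lambda|$ and, through Lemma~\ref{L:irreps:Verma:periodicl}\ref{L:irreps:Verma:periodicl:c}, $|\mu'|=|\mu|$, whence $N=N'$. Put $s=i+j|\lambda|$ and define $\Psi(T_k)=T'_{k-s}$; this is well defined because $N=N'$ and $\gamma=\gamma'$ make the two periodicity rules $T_{k+N}=\gamma T_k$ compatible, and it is bijective since $k\mapsto k-s$ permutes residues mod $N$. That $\Psi$ intertwines $h$, $x$ and $y$ is then a short check: the $h$-identity uses $\lambda'(k-s)=\lambda(k-s+i)=\lambda(k-j|\lambda|)=\lambda(k)$, the $x$-identity is immediate, and the $y$-identity uses $\mu'(k-s)=\mu(k-s+i+j|\lambda|)=\mu(k)$.

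The step I expect to be the main obstacle is establishing (iii), the one-dimensionality and distinctness of the joint $(h,xy)$-weights, since it is precisely what rigidifies an abstract module isomorphism into a diagonal map followed by a cyclic shift. Once (iii) is in place the remaining bookkeeping is routine; the only point deserving care is the appearance of two different shift amounts, namely $i$ for $\lambda'$ against $i+j|\lambda|$ for $\mu'$, which simply reflects the period $|\lambda|$ of $\lambda$ as opposed to the period $N$ of $\mu$, together with the role of $\gamma$ in the wrap-around $xT_{N-1}=\gamma T_0$.
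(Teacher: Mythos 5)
Your proposal is correct and follows essentially the same route as the paper's proof: both deduce $\gamma=\gamma'$ from the scalar action of $x^{|\lambda||\mu|}$, rigidify the isomorphism via the diagonal actions of $h$ and $xy$ together with the distinctness of eigenvalues from Lemma~\ref{L:irreps:Verma:periodicl}\ref{L:irreps:Verma:periodicl:c}, propagate along the $x$-action to obtain a cyclic shift, and construct the converse isomorphism by the explicit shift $\omega_k\mapsto\omega'_{k-i-j|\lambda|}$. The only (cosmetic) difference is that you match one-dimensional joint $(h,xy)$-eigenlines in a single step and extract $i$ and $j$ from the shift $s$ by the division algorithm, whereas the paper first matches the $|\mu|$-dimensional $h$-eigenspaces to get $i$ and then uses the $(xy)$-eigenvalues within one eigenspace to get $j$.
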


\begin{proof}
We start out the proof by establishing notation and recalling the properties of the simple modules involved.

Set $V=\Af/\FF[t^{\pm1}](t^{|\lambda||\mu|}-\gamma)$ and $V'=\Af[\lambda', \mu']/\FF[t^{\pm1}](t^{|\lambda'||\mu'|}-\gamma')$. Then $\dim_\FF V=|\lambda||\mu|$ and $\dim_\FF V'=|\lambda'||\mu'|$. Define $\omega_i=t^i+\FF[t^{\pm1}](t^{|\lambda||\mu|}-\gamma)\in V$, so that $\omega_{i+k|\lambda||\mu|}=\gamma^k\omega_i$, for all $i, k\in\ZZ$. Then $\pb{\omega_i}_{i\in I}$ is a basis of $V$ for all $I\subseteq \ZZ$ such that $I$ contains precisely one representative of each congruence class modulo $|\lambda||\mu|$. As in \eqref{E:reps:action:omegas}, for all $i\in\mathbb Z$,
\begin{equation*}
x\omega_i=\omega_{i+1},\quad y\omega_i=\mu(i)\omega_{i-1},\quad h\omega_i=\lambda(i)\omega_i\quad \text{and thus also}\quad (xy)\omega_i=\mu(i)\omega_i.
\end{equation*}
The elements $\omega_i'\in V'$ are defined similarly and have identical properties.

The action of $h$ on $V$ is diagonalizable with $|\lambda|$ distinct eigenvalues, $\lambda(0), \ldots, \lambda(|\lambda|-1)$, the eigenspace for $\lambda(i)$ being
\begin{equation*}
V_i=\spann_\FF\pb{\omega_{i+j|\lambda|}\mid 0\leq j<|\mu|}. 
\end{equation*}
Similarly,
\begin{equation*}
V_i'=\spann_\FF\pb{\omega_{i+j|\lambda'|}'\mid 0\leq j<|\mu'|} 
\end{equation*}
is the eigenspace for the eigenvalue $\lambda'(i)$ corresponding to the action of $h$ on $V'$.

Suppose that $\phi:V\longrightarrow V'$ is an isomorphism. Then in particular these modules have the same dimension, so $|\lambda||\mu|=|\lambda'||\mu'|$. The element $x^{|\lambda||\mu|}$ acts on $V$ via multiplication by $\gamma$ and on $V'$ via multiplication by $\gamma'$. Thus, $\gamma=\gamma'$. Moreover, since the number of distinct eigenvalues of $h$ must be the same on $V$ and on $V'$, we have $|\lambda|=|\lambda'|$ and consequently also $|\mu|=|\mu'|$.

For all $k$, $\phi(V_k)$ is an $h$-eigenspace in $V'$ with eigenvalue $\lambda(k)$. As $\phi$ is bijective, this must be in fact a full eigenspace and furthermore $\lambda(\ZZ)=\lambda'(\ZZ)$. In particular, there is  $0\leq i<|\lambda|$ such that $\lambda(i)=\lambda'(0)$ and $\phi(V_i)=V'_0$. Then, as $x^k V_j=V_{j+k}$ for all $j$, and likewise for $V'_j$, the equality
\begin{equation*}
V'_k=x^kV'_0=x^k \phi(V_i)=\phi(x^kV_i)=\phi(V_{k+i})
\end{equation*}
says that $\lambda'(k)=\lambda(k+i)$, for all $k\geq 0$. By the periodicity of $\lambda$ and $\lambda'$, this relation holds for all $k\in\ZZ$.

Now, $\omega'_0\in V'_0= \phi(V_i)$ and the $(xy)$-eigenvalue of $\omega'_0$ is $\mu'(0)$. On the other hand, the $(xy)$-eigenvalues on $\phi(V_i)$ are $\mu(i+j|\lambda|)$, for $0\leq j<|\mu|$, and by Lemma~\ref{L:irreps:Verma:periodicl}\ref{L:irreps:Verma:periodicl:c}, these are all distinct. So, there is one (unique) such $j$ satisfying $\mu'(0)=\mu(i+j|\lambda|)$ and $\phi(\omega_{i+j|\lambda|})=\alpha\omega'_0$, for some nonzero scalar $\alpha$. Then, as above, acting on both sides of this equality by $x^k$, we find that $\phi(\omega_{k+i+j|\lambda|})=\alpha\omega'_k$, from which follows that $\mu'(k)=\mu(k+i+j|\lambda|)$, first for all $k\geq 0$ and then for all $k\in\ZZ$, by the periodicity of $\mu$ and $\mu'$.

Conversely, if $\gamma=\gamma'$ and \eqref{E:irreps:isosA} holds for some $0\leq i<|\lambda|$ and $0\leq j<|\mu|$, then $|\lambda|=|\lambda'|$ and $|\mu|=|\mu'|$, which implies that $\dim_\FF V=\dim_\FF V'$. Define the linear map $\phi:V\longrightarrow V'$ on the basis $\pb{\omega_k}_{0\leq k<|\lambda||\mu|}$ of $V$ by $\omega_k\mapsto\omega'_{k-i-j|\lambda|}$. By an earlier remark, $\pb{\omega'_{k-i-j|\lambda|}}_{0\leq k<|\lambda||\mu|}$ is a basis of $V'$, so $\phi$ is bijective. To show that $\phi$ is an isomorphism, it suffices to verify that $a\phi(\omega_k)=\phi(a\omega_k)$ for all $k\in\ZZ$ and all $a\in\pb{x, h, y}$. In case $a=x$ this is immediate, in case $a=h$ this follows from \eqref{E:irreps:isosA} and the fact that $|\lambda|=|\lambda'|$ and in case $a=y$ this follows from \eqref{E:irreps:isosA}. So indeed $V\simeq V'$.
\end{proof}

\begin{remark}
The proof of Proposition~\ref{P:irreps:isosA} implies that 
\begin{equation*}
\mathrm{End}_{\hqfg}\seq{\Af/\FF[t^{\pm1}](t^{|\lambda||\mu|}-\gamma)}=\FF\, 1, 
\end{equation*}
where $1$ is the identity endomorphism. In case $\FF=\overline{\FF}$, this is just a consequence of Schur's Lemma, but we now know that this holds over an arbitrary field $\FF$.
\end{remark}

There is of course a dual result for the finite-dimensional simple quotients of $\Bf$.

\begin{prop}\label{P:irreps:isosB}
Assume $q\neq 0$ and let $\lambda, \lambda'\in\Sf$, $\mu\in\Tf$, $\mu'\in\Tf[q,g,\lambda']$ and $\gamma, \gamma'\in\FF^*$, with $|\lambda|, |\lambda'|, |\mu|, |\mu'|\geq 1$. Then
\begin{equation*}
\Bf/\FF[t^{\pm1}](t^{|\lambda||\mu|}-\gamma)\simeq \Bf[\lambda', \mu']/\FF[t^{\pm1}](t^{|\lambda'||\mu'|}-\gamma') 
\end{equation*}
if and only if $\gamma=\gamma'$ and there exist $0\leq i<|\lambda|$, $0\leq j<|\mu|$ such that \eqref{E:irreps:isosA} holds.
\end{prop}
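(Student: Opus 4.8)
The plan is to deduce Proposition~\ref{P:irreps:isosB} from Proposition~\ref{P:irreps:isosA} by exploiting the order~$2$ anti-automorphism $\iota$, exactly as Proposition~\ref{P:irreps:fdiml:ybijective} was deduced from Proposition~\ref{P:irreps:fdiml:xbijective}. The key observation is that the modules $\Bf$ were constructed precisely as the finite dual of $\Af$, with the action twisted by $\iota$, so passing to duals should convert a statement about $\Af$-quotients into the corresponding statement about $\Bf$-quotients without any new computation.

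First I would record the effect of the duality functor $V\mapsto V^*$ (with the $\iota$-twisted action) on the simple modules in play. Concretely, I would show that the finite dual of $\Af/\FF[t^{\pm1}](t^{|\lambda||\mu|}-\gamma)$ is isomorphic to $\Bf[\lambda,\mu']/\FF[t^{\pm1}](t^{|\lambda||\mu|}-\gamma^{-1})$ for a suitable $\mu'$ determined by $\mu$, by reading off the action from \eqref{E:reps:def:Af} and \eqref{E:reps:def:Bf} and comparing with the computation already done in Subsection~\ref{SS:irreps:Verma} showing $x\cdot f_i=\mu(i+1)f_{i+1}$, $y\cdot f_i=f_{i-1}$, $h\cdot f_i=\lambda(i)f_i$. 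Since the duality functor is exact and contravariant, it sends simple modules to simple modules and preserves and reflects isomorphisms: $M\simeq N$ if and only if $M^*\simeq N^*$.

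Then I would argue as follows. Two $\Bf$-quotients are isomorphic if and only if their duals, which are $\Af$-quotients by the preceding step, are isomorphic; by Proposition~\ref{P:irreps:isosA} the latter holds if and only if the $\gamma$-parameters agree and the translation conditions \eqref{E:irreps:isosA} hold for the associated $\lambda$'s and $\mu'$'s. The final task is purely bookkeeping: to check that the conditions coming out of the dual statement, once re-expressed in terms of the original data $\lambda,\lambda',\mu,\mu'$ and $\gamma,\gamma'$, are exactly ``$\gamma=\gamma'$ together with \eqref{E:irreps:isosA}''. I expect this translation of parameters to be the main (though still routine) obstacle: one must verify that the inversion $\gamma\mapsto\gamma^{-1}$ cancels when comparing both sides, and that the shift $\mu\mapsto\mu'$ induced by dualizing is compatible with the translation indices $i,j$ in \eqref{E:irreps:isosA}, so that the very same condition is reproduced. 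Alternatively, if tracking the duality bookkeeping proves cumbersome, I would instead give a direct proof mirroring Proposition~\ref{P:irreps:isosA} verbatim, using \eqref{E:reps:def:Bf} in place of \eqref{E:reps:def:Af}: here $h$ is still diagonalizable with eigenspaces indexed by $\lambda$, and $yx$ (rather than $xy$) is the relevant commuting diagonalizable operator whose distinct eigenvalues on each $h$-eigenspace are governed by Lemma~\ref{L:irreps:Verma:periodicl}\ref{L:irreps:Verma:periodicl:c}, so the same eigenspace-matching argument yields \eqref{E:irreps:isosA}.
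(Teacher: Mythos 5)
Your proposal is correct and is essentially the paper's own route: the paper states Proposition~\ref{P:irreps:isosB} without proof as ``the dual result,'' i.e.\ exactly the $\iota$-duality transfer you describe (as in the deduction of Proposition~\ref{P:irreps:fdiml:ybijective} from Proposition~\ref{P:irreps:fdiml:xbijective}). The bookkeeping you flagged works out as you anticipated: the dual of $\Af/\FF[t^{\pm1}](t^{|\lambda||\mu|}-\gamma)$ is $\Bf[\lambda,\mu]/\FF[t^{\pm1}](t^{|\lambda||\mu|}-\gamma^{-1})$ with the \emph{same} $\lambda$ and $\mu$ (only $\gamma$ is inverted), so applying Proposition~\ref{P:irreps:isosA} to the duals reproduces $\gamma=\gamma'$ and \eqref{E:irreps:isosA} verbatim.
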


Next, we tackle the finite-dimensional simple quotients of $\Cf$. Since $\Cf/\FF[t]t^n$, when defined, is $n$-dimensional, the only question is the unicity of $\alpha\in\FF$, which is easy to establish in case $\Cf/\FF[t]t^n$ is simple.

\begin{prop}\label{P:irreps:isosC}
Let $\alpha, \alpha'\in\FF$ and $n\geq 1$ be such that $\nu_\alpha(n)=0=\nu_{\alpha'}(n)$ and $\nu_\alpha(i), \nu_{\alpha'}(i)\neq0$, for all $1\leq i\leq n-1$. Then $\Cf/\FF[t]t^n\simeq \Cf[\alpha']/\FF[t]t^n$ if and only if $\alpha=\alpha'$.
\end{prop}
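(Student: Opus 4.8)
The plan is to prove this by extracting an isomorphism invariant from the module $\Cf/\FF[t]t^n$ that pins down $\alpha$ uniquely. The natural candidate is the action of $h$, since in the basis $\pb{\omega_0,\ldots,\omega_{n-1}}$ given by \eqref{E:irreps:fdiml:Cfsimple:basis} the operator $h$ acts diagonally with eigenvalues $f^{[i]}(\alpha)$ for $0\leq i<n$. The backward direction is trivial: if $\alpha=\alpha'$ then the modules are literally defined by the same formulas, hence equal. So the content is entirely in the forward direction.

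For the forward direction, first I would suppose $\phi:\Cf/\FF[t]t^n\longrightarrow\Cf[\alpha']/\FF[t]t^n$ is an isomorphism and locate a canonical generator. The key structural feature here, in contrast to the $\Af$ and $\Bf$ cases, is that $x$ acts nilpotently with a one-dimensional kernel: from \eqref{E:irreps:fdiml:Cfsimple:basis}, $\ker x=\FF\omega_{n-1}$ and dually $\ker y=\FF\omega_0$. An isomorphism must send $\ker y$ to $\ker y$, so $\phi(\omega_0)=c\,\omega'_0$ for some $c\in\FF^*$ (writing $\omega'_i$ for the basis of the target). Since $\omega_0$ is an $h$-eigenvector with eigenvalue $f^{[0]}(\alpha)=\alpha$ and $\omega'_0$ is an $h$-eigenvector with eigenvalue $\alpha'$, and $\phi$ intertwines the $h$-action, we get $\alpha\,c\,\omega'_0=\phi(h\omega_0)=h\phi(\omega_0)=\alpha'\,c\,\omega'_0$, and as $c\neq0$ this forces $\alpha=\alpha'$.

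The only point requiring a little care is the claim that $\phi(\ker y)=\ker y$; this is immediate because $\phi$ is a module homomorphism, so $y\phi(\omega_0)=\phi(y\omega_0)=\phi(0)=0$, placing $\phi(\omega_0)$ in $\ker y$ on the target side, which is one-dimensional. Thus $\phi(\omega_0)$ is a nonzero scalar multiple of $\omega'_0$ precisely because $\omega'_0$ spans $\ker y$ in the target. I expect no serious obstacle here: unlike the $\Af$ and $\Bf$ cases, where the eigenvalue sets $\lambda(\ZZ)$ could coincide under a shift and one had to track both $h$ and $xy$ eigenvalues, here there is a distinguished cyclic generator annihilated by $y$ and carrying the single eigenvalue $\alpha$, so comparing that one eigenvalue suffices. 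Alternatively, and perhaps more cleanly, I could argue that the whole ordered eigenvalue sequence $\alpha, f(\alpha), \ldots, f^{[n-1]}(\alpha)$ of $h$ is an invariant: since $x$ shifts the flag $\FF\omega_0\subset\FF\omega_0+\FF\omega_1\subset\cdots$ determined by the $x$-nilpotency filtration, an isomorphism preserves this flag up to the direction of $y$, and reading off the $h$-eigenvalue on the socle $\ker y$ returns $\alpha$ itself.
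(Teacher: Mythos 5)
Your proof is correct and follows essentially the same route as the paper's: both identify $\ker y$ as the one-dimensional space $\FF\omega_0$ (one-dimensionality coming from the hypothesis $\nu_\alpha(i)\neq0$ for $1\leq i\leq n-1$), note that any module isomorphism carries $\ker y$ to $\ker y$, and compare the $h$-eigenvalues $\alpha$ and $\alpha'$ on these lines to conclude $\alpha=\alpha'$. Your alternative remark about the full eigenvalue sequence and the $x$-nilpotency flag is unnecessary extra machinery that the paper's (and your main) argument does not need.
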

\begin{proof}
Since $\nu_\alpha(i)\neq0$, for all $1\leq i\leq n-1$, then by \eqref{E:irreps:fdiml:Cfsimple:basis}, $\pb{v\in\Cf/\FF[t]t^n \mid yv=0}$ is one dimensional and $h$ acts on this space by multiplication by $\alpha$. Any isomorphism from $\Cf/\FF[t]t^n$ to $\Cf[\alpha']/\FF[t]t^n$ sends the space $\pb{v\in\Cf/\FF[t]t^n \mid yv=0}$ to $\pb{v'\in\Cf[\alpha']/\FF[t]t^n \mid yv'=0}$ and $h$ acts on the latter by multiplication by $\alpha'$, it follows that $\Cf/\FF[t]t^n\simeq \Cf[\alpha']/\FF[t]t^n$ implies that $\alpha=\alpha'$.
\end{proof}

Finally, we see when a finite-dimensional simple quotient of $\Af$ is isomorphic to a quotient of $\Bf$.

\begin{prop}\label{P:irreps:isosAB}
Assume $q\neq 0$ and let $\lambda, \lambda'\in\Sf$, $\mu\in\Tf$, $\mu'\in\Tf[q,g,\lambda']$ and $\gamma, \gamma'\in\FF^*$, with $|\lambda|, |\lambda'|, |\mu|, |\mu'|\geq 1$. Then
\begin{equation*}
\Af/\FF[t^{\pm1}](t^{|\lambda||\mu|}-\gamma)\simeq \Bf[\lambda', \mu']/\FF[t^{\pm1}](t^{|\lambda'||\mu'|}-\gamma') 
\end{equation*}
if and only if $\gamma=\gamma'\prod_{j=0}^{|\lambda||\mu|-1}\mu(j)$ and there exist $0\leq i<|\lambda|$, $0\leq j<|\mu|$ such that \eqref{E:irreps:isosA} holds.
\end{prop}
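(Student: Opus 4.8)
The plan is to mirror the eigenspace analysis carried out in the proof of Proposition~\ref{P:irreps:isosA}, adapting it to the asymmetry between the actions \eqref{E:reps:def:Af} and \eqref{E:reps:def:Bf}. Write $V=\Af/\FF[t^{\pm1}](t^{|\lambda||\mu|}-\gamma)$ and $V''=\Bf[\lambda',\mu']/\FF[t^{\pm1}](t^{|\lambda'||\mu'|}-\gamma')$, with the standard bases $\{\omega_i\}$ and $\{\omega''_i\}$ satisfying $\omega_{i+|\lambda||\mu|}=\gamma\omega_i$ and $\omega''_{i+|\lambda'||\mu'|}=\gamma'\omega''_i$. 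The crucial preliminary remark is that on both modules $h$ acts by $\lambda(i)$ (resp.\ $\lambda'(i)$) and $xy$ acts by $\mu(i)$ (resp.\ $\mu'(i)$); only the separate actions of $x$ and $y$ differ, with $x\omega_i=\omega_{i+1}$ but $x\omega''_i=\mu'(i+1)\omega''_{i+1}$. Thus the $h$- and $(xy)$-eigenspace decompositions have exactly the same shape as in Proposition~\ref{P:irreps:isosA}, and only the scalars intertwining the $x$-action need extra care.

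For the forward implication, first I would note that $x^{|\lambda||\mu|}$ acts on $V$ by the nonzero scalar $\gamma$, so $x$ is bijective on $V$; an isomorphism $V\simeq V''$ therefore forces $x$ to be bijective on $V''$ as well. Since $x^{|\lambda'||\mu'|}$ acts on $V''$ by $\gamma'\prod_{j=0}^{|\lambda'||\mu'|-1}\mu'(j)$, bijectivity forces $\mu'(j)\neq 0$ for all $j$, so that $x$ maps each $h$-eigenspace of $V''$ bijectively onto the next one, exactly as in the $\Af$-setting. With this in place the argument of Proposition~\ref{P:irreps:isosA} applies essentially verbatim: equality of dimensions and of the number of distinct $h$-eigenvalues gives $|\lambda|=|\lambda'|$ and $|\mu|=|\mu'|$, and tracking an $h$-eigenvector through $\phi$, then refining by its $(xy)$-eigenvalue via Lemma~\ref{L:irreps:Verma:periodicl}\ref{L:irreps:Verma:periodicl:c}, yields $0\leq i<|\lambda|$ and $0\leq j<|\mu|$ satisfying \eqref{E:irreps:isosA}. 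Comparing the scalars by which $x^{|\lambda||\mu|}$ acts on the two sides gives $\gamma=\gamma'\prod_{j}\mu'(j)$, which equals $\gamma'\prod_{j=0}^{|\lambda||\mu|-1}\mu(j)$ because \eqref{E:irreps:isosA} merely cyclically shifts the index set of a full-period product.

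For the converse, given the scalar condition and \eqref{E:irreps:isosA}, I would build the isomorphism explicitly as $\phi(\omega_k)=c_k\,\omega''_{k-i-j|\lambda|}$, where the twisting scalars are fixed by $c_0=1$ and the recursion $c_k=\mu(k)c_{k-1}$. Here the hypothesis $\gamma=\gamma'\prod_{j=0}^{|\lambda||\mu|-1}\mu(j)\neq 0$ guarantees $\mu(k)\neq 0$ for all $k$, so every $c_k$ is defined and nonzero and $\phi$ is bijective. Checking equivariance then reduces to three routine verifications: the $h$-action matches because $\lambda'(k-i-j|\lambda|)=\lambda(k)$ by \eqref{E:irreps:isosA} and periodicity; the $y$-action matches by the defining recursion of $c_k$; and the $x$-action matches because $c_{k+1}/c_k=\mu(k+1)=\mu'(k+1-i-j|\lambda|)$, again using \eqref{E:irreps:isosA}. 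Finally, $\phi$ descends to the quotients precisely when $c_{k+|\lambda||\mu|}\,\gamma'=\gamma\, c_k$, and since $c_{k+|\lambda||\mu|}/c_k=\prod_{j=0}^{|\lambda||\mu|-1}\mu(j)$ this is exactly the scalar condition.

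I expect the main obstacle to be the bookkeeping of the twisting scalars $c_k$ and, in the forward direction, the observation that the non-vanishing of all the $\mu$-values --- which is what makes $x$ bijective on the $\Bf$-side and what makes the $c_k$ invertible --- is forced by, and in fact equivalent to, the displayed scalar condition. This is the one genuinely new phenomenon compared with Proposition~\ref{P:irreps:isosA}, where $x$ acted bijectively by construction and no such twist was needed.
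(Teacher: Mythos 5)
Your proof is correct, and it reaches the statement by a genuinely more self-contained route than the paper's. The paper does not re-run the eigenspace analysis: it first observes that whenever $\prod_{j=0}^{n-1}\mu(j)\neq 0$ (with $n=|\lambda||\mu|$), the rescaled basis $\overline\omega_i=\bigl(\prod_{j=0}^{i}\mu(j)^{-1}\bigr)\omega_i$ converts the $\Af$-quotient into a $\Bf$-quotient with the \emph{same} pair $(\lambda,\mu)$ and twisted scalar $\tilde\gamma=\gamma/\bigl(\mu(0)\cdots\mu(n-1)\bigr)$, then settles both implications by citing Proposition~\ref{P:irreps:isosB}; the scalar condition is detected by comparing the action of $y^{n}$ (which is $\prod_j\mu(j)/\gamma$ on the $\Af$-side and $1/\gamma'$ on the $\Bf$-side) rather than your $x^{n}$, which is an equally valid choice. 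Your twisting scalars $c_k$ with $c_0=1$, $c_k=\mu(k)c_{k-1}$ are exactly the paper's rescaling in disguise, but you fold them directly into a single intertwiner $\omega_k\mapsto c_k\,\omega''_{k-i-j|\lambda|}$ and redo the forward-direction eigenvector tracking of Proposition~\ref{P:irreps:isosA} on the mixed pair, instead of factoring through an intermediate $\Af\simeq\Bf$ identification. What your version buys is independence from Proposition~\ref{P:irreps:isosB} (which the paper states without proof, as the dual of \ref{P:irreps:isosA}) and the nice explicit remark that the displayed scalar condition is \emph{equivalent} to bijectivity of $x$ on the $\Bf$-side; what the paper's modular reduction buys is brevity, since both implications become two lines once the change-of-basis lemma is in place. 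Two bookkeeping points you should make explicit: in the converse, \eqref{E:irreps:isosA} forces $|\lambda|=|\lambda'|$ and $|\mu|=|\mu'|$, so the two quotients have the common dimension $n$ needed for your descent computation; and both the identity $\prod_{j=0}^{n-1}\mu'(j)=\prod_{j=0}^{n-1}\mu(j)$ and the relation $c_{k+n}/c_k=\prod_{j=0}^{n-1}\mu(j)$ use that $\mu$ and $\mu'$ are periodic of period $n$, which is Lemma~\ref{L:irreps:Verma:periodicl}\ref{L:irreps:Verma:periodicl:c} rather than a triviality.
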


\begin{proof}
Suppose that $\prod_{j=0}^{|\lambda||\mu|-1}\mu(j)\neq 0$. Set $V=\Af/\FF[t^{\pm1}](t^{|\lambda||\mu|}-\gamma)$, $n=|\lambda||\mu|$ and $\omega_i=t^i+\FF[t^{\pm1}](t^{n}-\gamma)\in V$, as in Proposition~\ref{P:irreps:isosA}. We define a new basis $\pb{\overline\omega_i}_{0\leq i<n}$ of $V$ by $\overline\omega_i=\seq{\prod_{j=0}^{i}\mu(j)^{-1}}\omega_i$, for all $0\leq i<n$. Then:
\begin{gather*}
h\overline\omega_i=\lambda(i)\overline\omega_i,\\
x\overline\omega_i=\mu(i+1)\overline\omega_{i+1}, \ \text{if $i<n-1$}, \quad x\overline\omega_{n-1}=\frac{\gamma}{\mu(1)\cdots\mu(n-1)}\overline\omega_0,\\
y\overline\omega_i=\overline\omega_{i-1}, \ \text{if $i>0$}, \quad y\overline\omega_0
=\frac{\mu(0)\cdots\mu(n-1)}{\gamma}\overline\omega_{n-1}. 
\end{gather*}
Comparing the above with \eqref{E:reps:def:Bf} we see that 
\begin{equation*}
V\simeq \Bf/\FF[t^{\pm1}](t^{n}-\tilde\gamma), \quad \text{with $\tilde\gamma=\frac{\gamma}{\mu(0)\cdots\mu(n-1)}$.}
\end{equation*}

So suppose that $V\simeq V'$, where $V'=\Bf[\lambda', \mu']/\FF[t^{\pm1}](t^{|\lambda'||\mu'|}-\gamma')$. Then, in particular, $|\lambda'||\mu'|=n$. Notice that $y^n$ acts on $V$ as $\frac{\mu(0)\cdots\mu(n-1)}{\gamma}$ and on $V'$ as $\frac{1}{\gamma'}$. So $\gamma=\gamma'\prod_{j=0}^{n-1}\mu(j)$. Since $\gamma\neq 0$, it follows also that $\prod_{j=0}^{n-1}\mu(j)\neq 0$. By the first paragraph of the proof, we deduce that $V\simeq \Bf/\FF[t^{\pm1}](t^{n}-\gamma')$. So $\Bf/\FF[t^{\pm1}](t^{n}-\gamma')\simeq\Bf[\lambda', \mu']/\FF[t^{\pm1}](t^{|\lambda'||\mu'|}-\gamma')$ and the desired relations between $\lambda, \lambda',\mu, \mu'$ are given by Proposition~\ref{P:irreps:isosB}.

Conversely, suppose that $\gamma=\gamma'\prod_{j=0}^{n-1}\mu(j)$ and that \eqref{E:irreps:isosA} holds, for some $i, j$. Then, again we have that $|\lambda'||\mu'|=|\lambda||\mu|=n$ and $\prod_{j=0}^{n-1}\mu(j)\neq 0$. Therefore, 
\begin{equation*}
V\simeq \Bf/\FF[t^{\pm1}](t^{n}-\gamma')\simeq  \Bf[\lambda', \mu']/\FF[t^{\pm1}](t^{n}-\gamma'),
\end{equation*}
by Proposition~\ref{P:irreps:isosB}.
\end{proof}

\section{Concluding remarks}\label{S:concluding}

The class of quantum generalized Heisenberg algebras seems interesting and worth studying further. In a future work~\cite{LR20pr}, we determine when a quantum generalized Heisenberg algebra is noetherian, tackle the isomorphism problem for this class and also describe the associated groups of automorphisms. There are many more features of this class of algebras which deserve being investigated. We conclude the paper with a series of questions for further study.

\begin{itemize}
\item Compute the global dimension of $\hqfg$ (compare \cite{CS04}). 
\item Determine those quantum generalized Heisenberg algebras all of whose finite-dimensional representations are completely reducible (compare \cite{CM00}).
\item Study simple weight modules for $\hqfg$. In \cite{LMZ15} the authors set forth the study of simple weight modules over generalized Weyl algebras of non-automorphism type, applying their results to generalized Heisenberg algebras $\hh(f)$. It is natural to extend this study to all quantum generalized Heisenberg algebras $\hqfg$.
\item Determine the primitive ideals of $\hqfg$ (compare \cite{iP09} and \cite{iP11}).
\item Investigate Hochschild (co)homology of $\hqfg$ (compare \cite{CHS18}).
\item In \cite{BW01} the authors embed a certain type of down-up algebra $A$ into a skew group algebra $A\ast G$, where $G$ is a subgroup of the automorphism group of $A$, and then construct a Hopf algebra structure on $A\ast G$. As an example, when the defining parameters of $A$ are such that $A$ is isomorphic to the enveloping algebra of the Heisenberg Lie algebra, then the group $G$ is trivial and the Hopf structure obtained agrees with the usual one on an enveloping algebra. It would be a very interesting project to generalize this construction to (an appropriate subclass of) quantum generalized Heisenberg algebras.
\item In a different direction, in \cite{KM03} the authors classify the down-up algebras which have a Hopf algebra structure and this would be a natural question for $\hqfg$ as well.
\item Going back to the Physics literature on generalized Heisenberg algebras, where these appeared to be defined over rings more general than the polynomial ring $\FF[h]$, it would also be of interest to consider quantum generalized Heisenberg algebras defined over a Laurent polynomial ring $\FF[h^{\pm 1}]$, a power series ring $\FF[[h]]$ or the rational function field $\FF(h)$.
\end{itemize}

\newcommand{\germ}{\mathfrak}
\def\cprime{$'$} \def\cprime{$'$} \def\cprime{$'$}
\bibliographystyle{plain}

\end{document}